\def\1{^{-1}}
\def\id{{\sf id}}
\newtheorem{De}{Definition}
\newtheorem{Pro}[De]{Proposition}
\newtheorem{Le}[De]{Lemma}
\newtheorem{Co}[De]{Corollary}
\newtheorem{Rem}[De]{Remark}
\newtheorem*{Ex*}{Examples}
\newtheorem*{Example*}{Example}
\newtheorem*{th-non}{Theorem}
\def\G{{\bf G}}
\def\cat{{\sf Cat}}
\def\Der{{\sf Der}}
\def\H{{\sf H}}
\def\xto#1{\xrightarrow[]{#1}}
\def\d{\partial}
\def\1{^{-1}}
\def\im{{\sf Im}}
\def\ker{{\sf Ker}}
\def\cok{\sf Coker}
\def\dd{{\delta}}
\begin{document}

	\title{On the centre of crossed modules of  Lie algebras}
	\author{Mariam Pirashvili}
		
	\maketitle
		
\begin{abstract}
	This paper studies the relationship between crossed modules of Lie algebras and their centres. We show that any crossed module \(\partial : L_1\to L_0\) of Lie algebras fits in an exact sequence involving cohomology of the homotopy Lie algebras \(\pi_0(L_*)\) and \(\pi_1(L_*)\).
\end{abstract}

\section{Introduction}

Crossed modules (of groups) were introduced by J. H. C. Whitehead in the 1940's as a tool to study relative homotopy groups $\pi_2(X,A)$ \cite{w2}. 
It was discovered in the 60's that the category of crossed modules is isomorphic to the category of internal categories in the category of groups, see for example \cite{spaces}. Thus crossed modules can be considered as simplifications of such internal categories. A similar simplification exists also for internal categories in the category of Lie algebras. The corresponding objects are known as crossed modules of Lie algebras (see \cite{KL}, \cite{dg1}).

The aim of this work is to introduce the centre of a crossed module of Lie algebras. It is analogous to the centre of a crossed module (of groups) introduced by the author in \cite{mp_cent} which is closely related to the Gottlieb group \cite{gottlieb} of the classifying space and the Drinfeld centre of the corresponding monoidal category \cite{js}.

Recall that a crossed module of Lie algebras can be defined as a linear map $\d:L_1\to L_0$, where $L_0$ is a Lie algebra, $L_1$ is an $L_0$-module, i.e. we are given a bilinear map \(L_0\times L_1\to L_1\), \((x,a)\mapsto x\cdot a\) such that 
\[
[x,y]\cdot a = x\cdot (y\cdot a) - y\cdot (x\cdot a),
\] and $\d$ is a Lie module homomorphism for which the relation 
$$\partial(a)\cdot b+\partial(b)\cdot a=0$$
holds. The essential invariants of a crossed module $L_*=(L_1\xto{\d} L_0)$ are the Lie algebra $\pi_0(L_*)=\cok(\d)$ and the $\pi_0(L_*)$-module $\pi_1(L_*)=\ker(\d)$. Recall also that a braided crossed module of Lie algebras is a linear map of vector spaces $\partial:L_1\to L_0$ together with a bilinear map $L_0\times L_0\to L_1,\ \ (x,y)\mapsto \{x,y\}$
 satisfying the identities (\ref{22}), (\ref{24}) and (\ref{26}), see Section \ref{alternative_def}. Any braided crossed module is also a crossed module, where the Lie algebra structure on $L_0$ and the action of $L_0$ on $L_1$ are given by $$[x,y]=\partial\{x,y\}, \quad x\cdot a=\{x,\partial (a)\}.$$

Now we state our main results.

\begin{th-non}
\begin{enumerate}[wide, labelwidth=!, labelindent=0pt]
	\item [i)]  
Let $\d:L_1\to L_0$ be a crossed module of Lie algebras. There exists a braided crossed module  $\dd: L_1\to {\bf Z}_0(L_*)$, where  ${\bf Z}_0(L_*)$ is the collection of all pairs $(x, \xi)$, where $x\in L_0$ and $\xi:L_0\to L_1$ is a linear map satisfying the following identities 
	\begin{align*}
	\d \xi(t)&=[x,t], \\
	\xi (\d a )&= x \cdot a,  \\
	\xi([s,t])&=  s\cdot \xi(t)-t\cdot \xi(s) .  
	\end{align*}
Here $x,s,t\in L_0$ and $a\in L_1$. The linear map $\dd: L_1\to {\bf Z}_0(L_*)$ is given by
$\dd(c)=(\partial(c),\xi_c)$, where $\xi_c(t)=-t\cdot c.$ Moreover,  the structural bracket ${\bf Z}_0(L_*) \times {\bf Z}_0(L_*) \to L_1$ is given by
$$ \{ (x,\xi),(y,\eta) \}= \xi(y).$$
We call the braided crossed module $\dd: L_1\to {\bf Z}_0(L_*)$ \emph{the centre of the crossed module} $\d:L_1\to L_0$ and  denote it by ${\bf Z}_*(L_*)$.

	\item [ii)] Denote by ${\sf z}_0$ the map ${\bf Z}_0(L_*) \to L_0$ given by ${\sf z}_0(x,\xi)=x$. Define an action of $L_0$ on ${\bf Z}_0(L_*) $ by 
$$y\cdot (x,\xi):= ([y,x],\psi).$$
Here $y\in L_0$, $(x,\xi)\in {\bf Z}_0(L_*) $ and $\psi(t)=t\cdot \xi(y).$ With this action, the map ${\sf z}_0:{\bf Z}_0(L_*) \to L_0$
is a crossed module of Lie algebras which is denoted by $L_*//{\bf Z}_*(L_*)$.

	\item [iii)]  Let $L_*$ be a crossed module of Lie algebras. Then
$$\pi_1( {\bf Z}_*(L_*))\cong \H^0(\pi_0(L_*), \pi_1(L_*))$$
and one has an exact sequence
$$0\to \H^1(\pi_0(L_0), \pi_1(L_*))\to \pi_0({\bf Z}_*(L_*))\to  {\sf Z}_{\pi_1(L_*)}(\pi_0(L_*)) \to \H^2(\pi_0(L_*), \pi_1(L_*)).$$
Here, for a Lie algebra $M$ and an $M$-module $A$, one denotes by ${\sf Z}_A(M)$ the collection of all $m\in M$ such that $[m,x]=0$ for all $x\in M$ (so $m$ is central) and $m\cdot a=0$ for all $a\in A$.
\end{enumerate}
\end{th-non}

Part i) of the Theorem is proved below as Proposition \ref{clb}, while parts ii) and iii) are proved as Proposition \ref{11} and Corollary \ref{12} respectively.

To summarise parts i) and ii), say that any crossed module $\d:L_1\to L_0$ of Lie algebras fits in a commutative diagram

\[\xymatrix{L_1\ar[d]_{id} \ar[r]^{\dd}& {\bf Z}_0(L_*)\ar[d]^{{\sf z}_0}\\
	L_1 \ar[r]^{\d}& L_0}\]
where the top horizontal $L_1\xto{\dd} {\bf Z}_0(L_*)$ and right vertical $ {\bf Z}_0(L_*) \xto {{\sf z}_0} L_0$ arrows have again crossed module structures. In fact, the first one is even a braided crossed module. The pair of maps $({\sf z}_0,\id_{L_1})$ defines a morphism of crossed modules ${\bf Z}_*(L_*)\to L_*$. The crossed module $L_*//{\bf Z}_*(L_*)$ should be understood as the ``homotopic cofibre'' of ${\bf Z}_*(L_*)\to L_*$ since we have the following obvious exact sequence
$$0\to \pi_1({\bf Z}_*(L_*))\to \pi_1(L_*)\to \pi_1(L_*//{\bf Z}_*(L_*))\to $$ 
$$\to \pi_0({\bf Z}_*(L_*))\to \pi_0(L_*)\to \pi_0(L_*//{\bf Z}_*(L_*))\to 0.$$

As we said, the construction of ${\bf Z}_*(L_*)$ and the above properties are parallel to the similar construction given in \cite{mp_cent}. However the centre defined in \cite{mp_cent} has two more important properties. Let us recall these properties, closely following \cite{mp_cent}. Let $\G_*=(\G_1\xto{d} \G_0)$ be a crossed module (of groups). Denote by $\cat(\G_*)$  the corresponding internal category in the category of groups. In particular, it is a monoidal category and hence one can consider the centre ${\bf Z}^{Dr}( \cat(G_*))$ \cite{js}, which is a braided monoidal category and known as the Drinfeld centre. Let us also recall that $B(\G_*)$ denotes the classifying space of $\G_*$. Next, for a topological space $X$ denote by ${\bf Z}(X)$ the connected component of the space $Maps(X,X)$ of self continuous maps $X\to X$ containing the identity map $\id_X$. Then the following assertions hold:
\begin{equation}\label{1} {\bf Z}^{Dr}( \cat(\G_*))\cong \cat({\bf Z}_*(\G_*)), \quad {\bf Z}(B\G_*)\cong B({\bf Z}_*(\G_*)).\end{equation}
Recall also that if $(X,x)$ is a pointed map, then the evaluation at $x$ gives rise to a pointed map $ev_x:({\bf Z}(X), \id_X)\to (X,x)$. Now apply the functor $\pi_1$ and denote by $G(X,x)$ the image of the induced map
$$\pi_1({\bf Z}(X),\id_X)\to \pi_1(X,x).$$
The group $G(X,x)$ is known as the Gottlieb group \cite{gottlieb}.
As a consequence of isomorphisms we proved previously in \cite{mp_cent}, if $X=B(\G_*)$ is the classifying space of a crossed module $\G_*$, then
$G(X,x)$ is a subgroup of the so called Whitehead centre of $X$, which consists of those elements of $\pi_1(X)$ which are central and act trivially on $\pi_2(X)$. 

To find analogues of the isomorphisms (\ref{1}) in the Lie algebra framework would be of great interest. 

\section{Crossed modules in Lie algebras}\label{6sec}

In this section we fix terminology and notation  for  (braided) crossed modules in  Lie algebras \cite{KL}, \cite{CC}. 

\subsection{Definition of crossed and braided crossed modules of Lie algebras}
In this paper we fix a field $k$ of characteristic $\not =2$. All vector spaces and linear maps are considered over $k$. Accordingly, the Lie algebras are defined over $k$. 
 
Recall that if $L$ is a Lie algebra, then a (left) $L$-module is a vector space $V$ together with a bilinear map $L\times V\to V, (x,v)\mapsto x\cdot v$ such that
$$[x,y]\cdot v=x\cdot (y\cdot v)
-y\cdot (x\cdot v).$$
If additionally $V=M$ is also a Lie algebra and 
$$x\cdot [m,n]=[x\cdot m,n]+[m,x\cdot n]$$
holds for all $x\in L$ and $m,n\in M$, then we say that $L$ acts on the Lie algebra $M$. 

\begin{De}  A precrossed module of Lie algebras consists of a homomorphism of Lie
	algebras $\partial: L_1\to L_0$ together with an action of the Lie algebra $L_0$ on $L_1$, denoted by $(x,a)\mapsto x\cdot a$, $x\in L_0$, $a\in L_1$. One requires that the following identity holds:
	 \begin{equation} \label{18}
	 \partial (x\cdot a) = [x, \partial a]\end{equation} for all $a\in L_1$ and $x\in L_0$.
	 If additionally we have the identity \begin{equation}\label{19} \partial(a)\cdot b=[a,b]\end{equation} then $\partial: L_0\to L_0$ is called a crossed module. If additionally the Lie algebra structure on $L_0$ is trivial and the action of $L_0$ on $L_1$ is also trivial, then $L_*$ is called an abelian crossed module.
\end{De}

If $L_*$ is a precrossed module of Lie algebras, then both $Im(\d)$ and $Ker(\d)$ are ideals of $L_0$ and $L_1$  respectively. Thus  $\pi_0(L_*)$ and  $\pi_1(L_*)$ are Lie algebras. If additionally $L_*$ is a crossed module, then $\pi_1$ is a central ideal of $L_1$ and hence $\pi_1(L_*)$ is an abelian Lie algebra and the action of $L_0$ on $L_1$ induces 
a $\pi_0(L_*)$-module structure on $\pi_1(L_*)$. 
\begin{De}
	\cite[Proposition 6.20] {CC}  A braided crossed module (BCM) of Lie algebras $L_*$ consists of a homomorphism of Lie algebras $\partial: L_1\to L_0$ together with a bilinear map $$L_0\times L_0\to L_1, (x,y)\mapsto \{x,y\},$$ such that the
	following  identities hold:
	\begin{align} 
	\partial \{x,y\}&=[x,y], \label{20}\\
	 \{\partial a, \partial b\}&=[a,b],\label{21} \\
	 0&=\{\partial a,x\} +\{x,\partial a\},\label{22}\\
0&=	\{x,[y,z]\}+\{z,[x,y]\}+\{y,[z,x]\}. \label{23}
	\end{align}	
	
\end{De}

\begin{Le} Let  $L_*$ be a BCM of Lie  algebras.
	\begin{enumerate}[wide, labelwidth=!, labelindent=0pt]
		\item [i)]  Define the action of $L_0$ on $L_1$  by
	$$x\cdot a:=\{x,\d(a)\}.$$
	Then $L_*$ is  a crossed module of Lie algebras.
	
	\item [ii)]   The Lie algebra $\pi_0(L_*)$ is abelian and the action of $\pi_0(L_*)$ on $\pi_1(L_1)$ is trivial.
\end{enumerate}
\end{Le}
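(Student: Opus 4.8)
The plan is to verify the two assertions directly from the defining identities (\ref{20})--(\ref{23}), treating them as the only input.

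For part i), I would first show that $\partial\colon L_1\to L_0$ together with the action $x\cdot a := \{x,\partial a\}$ satisfies the axioms of a precrossed module, and then the extra crossed-module identity (\ref{19}). The key steps are: (a) the action is a Lie-algebra action of $L_0$ on $L_1$, i.e. $[x,y]\cdot a = x\cdot(y\cdot a) - y\cdot(x\cdot a)$; applying $\partial$ to the right-hand side and using (\ref{20}) this becomes a statement inside $L_0$ that follows from the Jacobi identity, but to get it at the level of $L_1$ one expands $x\cdot(y\cdot a) = \{x,\partial\{y,\partial a\}\} = \{x,[y,\partial a]\}$ using (\ref{20}), and similarly for the other term, so the identity to prove is exactly (\ref{23}) with $z=\partial a$. (b) The compatibility $\partial(x\cdot a) = [x,\partial a]$ is immediate: $\partial\{x,\partial a\} = [x,\partial a]$ by (\ref{20}). (c) The crossed-module identity $\partial(a)\cdot b = [a,b]$ reads $\{\partial a,\partial b\} = [a,b]$, which is precisely (\ref{21}). (d) One should also check the action preserves the bracket on $L_1$, i.e. $x\cdot[a,b] = [x\cdot a,b]+[a,x\cdot b]$; rewriting $[a,b] = \{\partial a,\partial b\}$ and $[x\cdot a,b] = \{\partial(x\cdot a),\partial b\} = \{[x,\partial a],\partial b\}$ (using (\ref{18}) and (\ref{21})), this again reduces to (\ref{23}). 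So essentially everything collapses onto (\ref{23}) and the already-known precrossed identities; the relation $\partial(a)\cdot b + \partial(b)\cdot a = 0$ quoted in the introduction then follows from (\ref{22}) or from antisymmetry of $[a,b]$.

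For part ii), I would argue that $\pi_0(L_*) = \operatorname{coker}\partial = L_0/\operatorname{Im}\partial$ is abelian: for $x,y\in L_0$ we have $[x,y] = \partial\{x,y\}\in\operatorname{Im}\partial$ by (\ref{20}), so the bracket vanishes modulo $\operatorname{Im}\partial$. For the triviality of the action of $\pi_0(L_*)$ on $\pi_1(L_*) = \ker\partial$: given $a\in\ker\partial$, one has $x\cdot a = \{x,\partial a\} = \{x,0\} = 0$ for every $x\in L_0$, so the action is already trivial on the nose on $\ker\partial$, hence certainly induces the trivial $\pi_0(L_*)$-action. (One should note in passing that $\ker\partial$ is central in $L_1$, as already recorded for crossed modules, so $\pi_1(L_*)$ is abelian.)

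I do not anticipate a genuine obstacle here — the whole lemma is a bookkeeping exercise. The one step requiring a moment's care is item (a)/(d) of part i), where the Leibniz-type identity for the action has to be traced back to the cyclic relation (\ref{23}) by substituting $z = \partial a$ (resp. $x\mapsto \partial a$, $y\mapsto\partial b$) and invoking (\ref{20}) to replace brackets $[y,\partial a]$ by $\partial\{y,\partial a\}$; getting the signs and the order of the three cyclic terms to match up is the only place an error could creep in. Everything else is a one-line substitution from the axioms.
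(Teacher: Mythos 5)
Your proposal is correct and follows essentially the same route as the paper: verify the Lie-module identity and the Leibniz compatibility by expanding via (\ref{20}), (\ref{21}) and reducing to the cyclic identity (\ref{23}), check (\ref{18}) and (\ref{19}) as one-line consequences of (\ref{20}) and (\ref{21}), and deduce part ii) from $[x,y]=\partial\{x,y\}$ and $x\cdot a=\{x,0\}=0$ for $a\in\ker\partial$. The only small point to make explicit when writing it out is that the reduction to (\ref{23}) also uses (\ref{22}) to flip terms of the form $\{\partial a,[x,y]\}$ into $-\{[x,y],\partial a\}$, exactly the sign bookkeeping you flagged.
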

\begin{proof}
	
\begin{enumerate}[wide, labelwidth=!, labelindent=0pt]
First let us show that we really obtain an action of $L_0$ on the Lie algebra $L_1$. This requires checking the following two identities:
\begin{equation}\label{221} [u,v]\cdot a=u\cdot (v\cdot a)-v\cdot (u\cdot a)\end{equation}
and
\begin{equation}\label{222} x\cdot [a,b]=[x\cdot a,b]+[a,x\cdot b].\end{equation}
In fact, we have
$$	[u,v]\cdot a=\{[u,v],\d(a)\}.$$
On the other hand, we also have
$$u\cdot (v\cdot a)=\{u,\d(v\cdot a)\}=\{u,\d \{v,\d(a)\} \}=\{u, [v,\d(a)]\}.$$
Similarly
$$v\cdot (u\cdot a)=\{v, [u,\d(a)]\}=-\{v,[\d(a),u] \}.$$
Now we can use (\ref{23}) to write
$$u\cdot (v\cdot a)-v\cdot (u\cdot a)-[u,v]\cdot a=\{u, [v,\d(a)]\}+\{v,[\d(a),u]+\{\d(a), [u,v]\} \}=0$$
and the identity (\ref{221}) follows.

For the identity (\ref{222}), observe that we have
$$x\cdot [a,b]=\{x,\d [a,b]\} =\{x,[\d (a),\d (b)]\}.$$
We also have
$$[x\cdot a,b]=\{\d(x \cdot a),\d(b)\}=\{ \d\{x,\d(a)\},\d(b)\}=-\{ \d(b),[x,\d(a)]\}
.$$
Similarly,
$$[a,x\cdot b]=-\{\d(a), [\d(b),x]\} $$
and the identity (\ref{222}) also follows from (\ref{23}).

We still need to check the relations (\ref{18}) and (\ref{19}) in our case. We have
$$\d (x\cdot a)=\d\{x,\d(a)\}=[x,\d(a)]$$
and (\ref{18}) is proved. Finally we have
$$\d(a)\cdot b=\{\d(a),\d(b)\}=[a,b]$$
and (\ref{19}) is proved. 
	
\item [ii)]  By (\ref{20}), the Lie algebra $\pi_0(L_*)$ is abelian and by part i) the action of $L_0$ on $\pi_1(L_*)=Ker(\d)$ is trivial.
\end{enumerate}
\end{proof}
\subsection{Alternative definitions}\label{alternative_def}

\begin{Le}\label{20le} A crossed module of Lie algebras can equivalently be defined as a linear map $\partial:L_1\to L_0$, where $L_0$ is a Lie algebra, $L_1$ is a $L_0$-module and $\partial$ is a homomorphism of $L_0$-modules (that is, the equality (\ref{18}) holds) satisfying additionally the following identity 
\begin{equation} \label{20l}\partial(a)\cdot b+\partial(b)\cdot a=0,\end{equation}
for all $a,b\in L_1$.
\end{Le}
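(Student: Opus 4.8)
The plan is to produce explicit mutually inverse passages between the two packages of data, the only genuine content being that in the alternative presentation one must \emph{build} the Lie bracket on $L_1$ and check it is a Lie bracket, rather than assume it.

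In one direction, suppose $\partial\colon L_1\to L_0$ is a crossed module in the sense of the first Definition. Then $L_1$ is a Lie algebra equipped with an action of $L_0$, so in particular it is an $L_0$-module; $\partial$ is a module homomorphism, which is exactly (\ref{18}); and applying (\ref{19}) twice yields $\partial(a)\cdot b+\partial(b)\cdot a=[a,b]+[b,a]=0$, which is (\ref{20l}). So a crossed module gives the data of the Lemma.

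Conversely, assume given a linear $L_0$-module map $\partial\colon L_1\to L_0$ satisfying (\ref{18}) and (\ref{20l}), and define a bracket on $L_1$ by $[a,b]:=\partial(a)\cdot b$; it is bilinear because the action is. I would then check, in this order: (a) antisymmetry $[a,b]=-[b,a]$, which is precisely (\ref{20l}); (b) $\partial$ is a Lie algebra homomorphism, i.e. $\partial([a,b])=\partial(\partial(a)\cdot b)=[\partial a,\partial b]$, which is just (\ref{18}) with $x=\partial a$; (c) the Jacobi identity: combining (b) with the $L_0$-module identity $[x,y]\cdot v=x\cdot(y\cdot v)-y\cdot(x\cdot v)$,
\[
[[a,b],c]=[\partial a,\partial b]\cdot c=\partial a\cdot(\partial b\cdot c)-\partial b\cdot(\partial a\cdot c)=[a,[b,c]]-[b,[a,c]],
\]
which is the Jacobi identity; (d) $L_0$ acts on the Lie algebra $L_1$, i.e. $x\cdot[a,b]=[x\cdot a,b]+[a,x\cdot b]$: expanding the left side with the module identity gives $x\cdot(\partial(a)\cdot b)=[x,\partial a]\cdot b+\partial a\cdot(x\cdot b)$, and rewriting $[x,\partial a]=\partial(x\cdot a)$ by (\ref{18}) turns this into $[x\cdot a,b]+[a,x\cdot b]$. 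Finally (\ref{19}) holds by the very definition of the bracket. Hence the data of the Lemma produces a crossed module in the original sense.

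These two constructions are mutually inverse: the first forgets only the bracket on $L_1$ (retaining the $L_0$-action), and the bracket reconstructed as $\partial(a)\cdot b$ coincides with the original one precisely by (\ref{19}); the other composite is the identity because (\ref{18}) and (\ref{20l}) leave no freedom in the bracket one must place on $L_1$. I do not expect a real obstacle here; the one point deserving care is that in the alternative definition $L_1$ is a priori merely a vector space with an $L_0$-action, so steps (a)--(d) must be verified from scratch using only bilinearity of the action, the module identity, (\ref{18}) and (\ref{20l}).
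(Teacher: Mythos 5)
Your proposal is correct and follows essentially the same route as the paper's proof: define $[a,b]:=\partial(a)\cdot b$, get antisymmetry from (\ref{20l}), the homomorphism property from (\ref{18}) with $x=\partial(a)$, the Jacobi identity from the $L_0$-module identity, and compatibility of the action again from (\ref{18}). Your explicit remark that the two constructions are mutually inverse (via (\ref{19})) is a small addition the paper leaves implicit, but the substance is identical.
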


\begin{proof} By forgetting the Lie algebra structure on $L_1$, we see that any crossed module gives rise to a structure as described in the Lemma. Conversely, we can uniquely reconstruct the bracket on $L_1$ from such a structure by $[a,b]:=\partial(a)\cdot b$. Our first claim is that $\partial $ respects the bracket:
$$\partial ([a,b])=\partial (\partial (a)\cdot b)=[\partial(a),\partial (b)].$$
Here we used the identity (\ref{18}) for $x=\partial(a)$.
Our second claim is that $L_1$ is a Lie algebra. In fact, we have	
	$$[a,b]+[b,a]=\partial(a)\cdot b+\partial(b)\cdot a=0,$$
showing that the bracket is anticommutative. For the Jacobi identity we have
\begin{align*}
	[a,[b,c]]+[c,[a,b]]+[b,[c,a]]&=[a,[b,c]]-[[a,b],c]-[b,[a,c]]\\
	&=\partial (a)\cdot (\partial(b)\cdot c )-\partial ([a,b])\cdot c- \partial (b)\cdot (\partial(a)\cdot c ).
\end{align*}
Since 
\begin{align*}
\partial ([a,b])\cdot c&=\partial (\partial a \cdot b)\cdot c\\ &=[\partial (a),\partial (b)]\cdot c\\ &=\partial (a)\cdot (\partial(b)\cdot c )-\partial (b)\cdot (\partial(a)\cdot c ),
\end{align*}
we see that $[a,[b,c]]+[c,[a,b]]+[b,[c,a]]=0$. Thus, $L_1$ is also a Lie algebra and the second claim is proved.

It remains to show that $L_0$ acts on $L_1$ as a Lie algebra. Indeed, we have
$$x\cdot [a,b]-[x\cdot a,b]-[a,x\cdot b]
=x\cdot (\partial(a)\cdot b)-(\partial(x\cdot a))\cdot b-\partial (a)\cdot (x\cdot b).$$
Now we can observe that 
$$(\partial(x\cdot a))\cdot b=[x,\partial (a)]\cdot b=x\cdot( \partial(a)\cdot b)-\partial(a)(x\cdot b).$$
Comparing these expressions we see that $x\cdot [a,b]=[x\cdot a,b]+[a,x\cdot b]$ and the result follows.
\end{proof}

\begin{Pro}\label{22p} A braided crossed module of Lie algebras can equivalently be defined as a linear map of vector spaces $\partial:L_1\to L_0$ together with a bilinear map
	$$L_0\times L_0\to L_1,\ \ (x,y)\mapsto \{x,y\}$$ satisfying the identity (\ref{22}) and also the following identities
	\begin{equation} \label{24}
	\partial \{x,x\}=0,
	\end{equation}
\begin{equation} \label{26}
\{u,\partial \{v,w\}\}+\{w,\partial \{u,v\}\}+\{v,\partial \{w,u\}\}=0,
\end{equation}
where $a\in L_1$ and $x,u,v,w\in L_0$.
\end{Pro}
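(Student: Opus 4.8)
The plan is to prove the equivalence of the two definitions of a braided crossed module by showing that the axiom system $\{(\ref{22}), (\ref{24}), (\ref{26})\}$ is equivalent, over a field of characteristic $\neq 2$, to the system $\{(\ref{20}), (\ref{21}), (\ref{22}), (\ref{23})\}$, where in the first system the Lie structure on $L_0$ and the $L_0$-action on $L_1$ are \emph{defined} by $[x,y] := \partial\{x,y\}$ and $x\cdot a := \{x,\partial(a)\}$. So the statement is really two implications, and the substance lies entirely in the direction that starts from the leaner axioms.

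First I would do the easy direction: assume $L_*$ is a braided crossed module in the original sense, so $(\ref{20})$--$(\ref{23})$ hold. Then $(\ref{22})$ is literally one of the axioms; $(\ref{24})$ follows from $(\ref{20})$ since $\partial\{x,x\} = [x,x] = 0$; and $(\ref{26})$ follows by rewriting $\partial\{v,w\} = [v,w]$ etc.\ via $(\ref{20})$ and then invoking $(\ref{23})$. That disposes of one half with no real work.

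For the converse — the main content — I would start from $(\ref{22}), (\ref{24}), (\ref{26})$ and the definitions $[x,y] = \partial\{x,y\}$, $x\cdot a = \{x,\partial(a)\}$, and recover the four original axioms. The key preliminary step is to extract bilinearity/antisymmetry of the bracket: polarising $(\ref{24})$, i.e.\ expanding $\partial\{x+y,x+y\} = 0$, gives $\partial\{x,y\} + \partial\{y,x\} = 0$, hence $[x,y] = -[y,x]$; here is where characteristic $\neq 2$ is used (to divide, or rather to conclude the symmetric and antisymmetric parts separate cleanly). Axiom $(\ref{20})$, $\partial\{x,y\} = [x,y]$, is then true by definition. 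Axiom $(\ref{21})$: compute $\{\partial a, \partial b\} = (\partial a)\cdot b = [a,b]$ where the last bracket is the reconstructed one on $L_1$ — but I should be careful and present $(\ref{21})$ in the form that makes sense for the alternative definition, namely as defining the bracket on $L_1$ consistently; applying $\partial$ to $\{\partial a, \partial b\}$ and using $(\ref{20})$ gives $[\partial a, \partial b]$, and the crossed-module identity $(\ref{20l})$ from Lemma~\ref{20le} must be checked, which amounts to $\{\partial a, \partial b\} + \{\partial b, \partial a\} = 0$ — precisely a consequence of $(\ref{22})$ with $x = \partial b$, or of the polarised $(\ref{24})$. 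Finally, the Jacobi identity $(\ref{23})$ for $[x,y] = \partial\{x,y\}$ is the heart of the matter: one feeds $[y,z] = \partial\{y,z\}$ into $\{x, [y,z]\} = \{x, \partial\{y,z\}\}$, cyclically, and recognises the resulting sum as exactly the left side of $(\ref{26})$, which vanishes.

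The main obstacle I anticipate is bookkeeping rather than conceptual difficulty: one must be scrupulous about which bracket ($L_0$ or $L_1$) and which action is meant at each step, since in the alternative formulation these are derived rather than given, and one needs the results of the preceding Lemma~\ref{20le} to know that the reconstructed bracket on $L_1$ is a genuine Lie bracket and that $\partial$ is a Lie homomorphism. The role of $(\ref{26})$ as a disguised Jacobi identity, once $\partial$ is applied, is the one genuinely non-obvious point, and the use of characteristic $\neq 2$ in polarising $(\ref{24})$ to get antisymmetry is the one place where the hypothesis on $k$ is essential; everything else is a direct substitution check that I would not grind through in the write-up.
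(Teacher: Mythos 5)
Your proposal is correct and follows essentially the same route as the paper: one direction is immediate from (\ref{20})--(\ref{23}), and for the converse you define the brackets via (\ref{20}) and (\ref{21}), obtain antisymmetry on $L_0$ by polarising (\ref{24}) and on $L_1$ from (\ref{22}) with $x=\partial a$, and recover (\ref{23}) (hence the Jacobi identities) from (\ref{26}); your appeal to Lemma~\ref{20le} merely packages the $L_1$-side checks that the paper carries out directly. One small quibble: characteristic $\neq 2$ is not actually needed in the polarisation step, since expanding $\partial\{x+y,x+y\}=0$ and cancelling the diagonal terms via (\ref{24}) already yields $\partial\{x,y\}+\partial\{y,x\}=0$ without dividing by $2$.
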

\begin{proof} 
By forgetting the Lie algebra structures on $L_0$ and $L_1$, we obtain a structure as described in the Proposition. In fact, we only need to check that the identities (\ref{24}) and (\ref{26}) hold. The identity (\ref{24}) follows from the fact that $[x,x]=0$ as in any Lie algebra and the identity (\ref{20}). Quite similarly, (\ref{26}) follows from (\ref{24}) and the identity (\ref{20}).

Conversely, assume $\partial:L_1\to L_0$ is equipped with a structure as described in the Proposition. Then we can define brackets on $L_0$ and $L_1$ based on the identities (\ref{20}) and (\ref{21}). In this way one obtains Lie algebras $L_0$ and $L_1$. In fact the Jacobi identity in both cases is a consequence of (\ref{26}). The antisymmetry of the bracket for $L_0$ follows from (\ref{24}). The antisymmetry for $L_1$ follows from (\ref{22}) by taking $x=\partial a$. 
Next, we  have
$\partial[a,b]=\partial\{\partial(b),\partial(a)\}$ by the definition of the bracket on $L_1$. The last expression is the same as $[\partial(a),\partial(b)]$ by the definition of the bracket on $L_0$. Thus, $\partial[a,b]=[\partial(a),\partial(b)]$. Hence, $\partial$ is a homomorphism of Lie algebras. By our constructions and assumptions, the identities (\ref{20}), (\ref{21}) and (\ref{22}) hold. Finally, the identity (\ref{23}) follows from (\ref{26}).
\end{proof}
\section{The centre of  a crossed module of Lie algebras} 
\subsection{Definition and the first properties}

The following is an analogue of the corresponding notion from \cite{mp_cent}.

\begin{De}\label{zl}Let $\partial: L_1\to L_0$ be a crossed module of Lie algebras. Denote by ${\bf Z}_0(L_*)$ the set of all pairs $(x, \xi)$, where $x\in L_0$ and $\xi:L_0\to L_1$ is a linear map satisfying the following identities 
	\begin{align}
	\d \xi(t)&=[x,t], \ \  \label{ZE1L}\\
	\xi (\d a )&= x \cdot a,  \  \label{ZE2L}\\
	\xi([s,t])&=  s\cdot \xi(t)-t\cdot \xi(s) .  \  \label{ZE3L}
	\end{align}
	Here $x,s,t\in L_0$ and $a\in L_1$.
\end{De}

\begin{Pro} \label{clb} \begin{enumerate}[wide, labelwidth=!, labelindent=0pt]
		\item [i)]  Let $c\in L_1$. Then the pair $(\partial(c),\xi_c)\in {\bf Z}_0(L_*)$, where
	$$\xi_c(t)=-t\cdot c.$$
\item [ii)] The linear map
$$\dd: L_1\to {\bf Z}_0(L_*)$$
given by $\dd(c)=(\partial(c),\xi_c)$, together with the bilinear map
$${\bf Z}_0(L_*) \times {\bf Z}_0(L_*) \to L_1$$
given by $$\{ (x,\xi),(y,\eta) \}= \xi(y),$$ is a braided crossed module of Lie algebras.
	\end{enumerate}
\end{Pro}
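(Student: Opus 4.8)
To prove Proposition \ref{clb} I would use the alternative description of braided crossed modules from Proposition \ref{22p}: it suffices to check that $\dd:L_1\to{\bf Z}_0(L_*)$ together with the bracket $\{(x,\xi),(y,\eta)\}=\xi(y)$ satisfies identities (\ref{22}), (\ref{24}) and (\ref{26}), where of course I must first verify that ${\bf Z}_0(L_*)$ is a vector space, that $\dd$ lands in it, and that $\{-,-\}$ is bilinear with values in $L_1$. Part i) is the statement that the pair $(\partial(c),\xi_c)$ with $\xi_c(t)=-t\cdot c$ lies in ${\bf Z}_0(L_*)$; this is three direct verifications. For (\ref{ZE1L}), $\d\xi_c(t)=-\d(t\cdot c)=-[t,\partial c]=[\partial c,t]$ using (\ref{18}). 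For (\ref{ZE2L}), $\xi_c(\d a)=-(\d a)\cdot c=(\d c)\cdot a$, using the crossed-module identity (\ref{20l}). For (\ref{ZE3L}), $\xi_c([s,t])=-[s,t]\cdot c=-s\cdot(t\cdot c)+t\cdot(s\cdot c)=s\cdot\xi_c(t)-t\cdot\xi_c(s)$, using the module identity. So part i) is routine.

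**Part ii), structural checks.** That ${\bf Z}_0(L_*)$ is a linear subspace of $L_0\times\mathrm{Hom}(L_0,L_1)$ is immediate since all three defining identities (\ref{ZE1L})--(\ref{ZE3L}) are linear in $(x,\xi)$. Linearity of $\dd$ follows because $c\mapsto\xi_c$ is linear. The bracket $\{(x,\xi),(y,\eta)\}=\xi(y)$ is visibly bilinear into $L_1$. The content of part ii) is then the three identities. Identity (\ref{24}): $\partial\{(x,\xi),(x,\xi)\}=\partial\xi(x)=[x,x]=0$ by (\ref{ZE1L}); here I should note that for the formulation of (\ref{24}) the Lie-bracket on ${\bf Z}_0(L_*)$ is the one induced via $\partial\{-,-\}$, but all that is needed is $\partial\xi(x)=0$. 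Identity (\ref{22}): with $\dd(a)=(\partial(a),\xi_a)$, one computes $\{\dd(a),(x,\xi)\}+\{(x,\xi),\dd(a)\}=\xi_a(x)+\xi(\partial(a))=-x\cdot a+x\cdot a=0$, where the second term is evaluated using (\ref{ZE2L}) for $(x,\xi)$, and the first using the definition of $\xi_a$. This is the step where the defining identity (\ref{ZE2L}) of ${\bf Z}_0(L_*)$ gets used in an essential way, and I expect it to be the cleanest ``aha'' of the proof rather than an obstacle.

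**Part ii), the main identity.** The real work is identity (\ref{26}): for $(x,\xi),(y,\eta),(z,\zeta)\in{\bf Z}_0(L_*)$ one must show
\[
\{(x,\xi),\partial\{(y,\eta),(z,\zeta)\}\}+\{(z,\zeta),\partial\{(x,\xi),(y,\eta)\}\}+\{(y,\eta),\partial\{(z,\zeta),(x,\xi)\}\}=0.
\]
Here $\partial$ on ${\bf Z}_0(L_*)$ is $\dd$, so $\partial\{(y,\eta),(z,\zeta)\}=\dd(\eta(z))=(\partial\eta(z),\xi_{\eta(z)})=([y,z],\xi_{\eta(z)})$ using (\ref{ZE1L}) for $\eta$. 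Therefore the first term is $\{(x,\xi),([y,z],\xi_{\eta(z)})\}=\xi([y,z])$. Cyclically, the whole left-hand side equals $\xi([y,z])+\zeta([x,y])+\eta([z,x])$. Now apply (\ref{ZE3L}): $\xi([y,z])=y\cdot\xi(z)-z\cdot\xi(y)$, and similarly for the other two. Summing the six terms, I would rewrite each using (\ref{ZE1L}) — e.g. $y\cdot\xi(z)$ pairs against $-z\cdot\xi(y)$ — and pass to a common form; the cancellation should follow from the Jacobi-type bookkeeping once one observes that $y\cdot\xi(z)+z\cdot\xi(y)$-type sums combine via the identities relating $\xi,\eta,\zeta$.

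**Expected obstacle.** The only delicate point is the cancellation in (\ref{26}): the six terms $y\cdot\xi(z)-z\cdot\xi(y)+x\cdot\zeta(y)-y\cdot\zeta(x)+z\cdot\eta(x)-x\cdot\eta(z)$ do not cancel in pairs by inspection, so I anticipate needing to feed in extra relations — most likely the compatibility that follows from comparing (\ref{ZE2L}) and (\ref{ZE3L}), or a symmetry like $\xi(y)+\eta(x)$ being controlled (analogous to (\ref{22})). Concretely, I would look for an identity of the shape $\xi(y)=-\eta(x)+(\text{something killed by the action})$ valid for pairs in ${\bf Z}_0(L_*)$, derived by applying $\xi$ to a suitable element and using all three defining relations; with such a ``graded-antisymmetry'' in hand the six-term sum collapses. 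This is the step I would budget the most care for; everything else is mechanical substitution using (\ref{18}), (\ref{20l}) and the module axioms.
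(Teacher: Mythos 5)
Your part i), the structural checks, and the verifications of (\ref{24}) and (\ref{22}) are correct and coincide with the paper's argument. The gap is in the main identity (\ref{26}): you correctly reduce it to showing
\[
S:=\xi([y,z])+\zeta([x,y])+\eta([z,x])=0,
\]
but you do not actually establish this; you only flag the six-term cancellation as an ``expected obstacle'' and gesture at a hoped-for graded antisymmetry. Worse, the symmetric strategy you describe (expand all three terms by (\ref{ZE3L}) and convert the resulting action terms back) does not close: converting $u\cdot\xi(v)$-type terms via (\ref{ZE2L}) and (\ref{ZE1L}) turns the six-term sum back into $-2S$, so that route only yields $3S=0$, which is insufficient since the paper assumes only ${\rm char}\,k\neq 2$. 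The same characteristic-$3$ problem appears if you first invoke the relation $t\cdot\xi(y)=-t\cdot\eta(x)$ (which is indeed true for pairs in ${\bf Z}_0(L_*)$, and is Lemma \ref{25le} of the paper) in the symmetric way you suggest.

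The missing step is short and asymmetric: rewrite only the single term $\xi([y,z])$ by (\ref{ZE3L}) as $y\cdot\xi(z)-z\cdot\xi(y)$, then use (\ref{ZE2L}) for $\eta$ and $\zeta$ together with (\ref{ZE1L}) for $\xi$ to get $y\cdot\xi(z)=\eta(\d\xi(z))=\eta([x,z])$ and $z\cdot\xi(y)=\zeta(\d\xi(y))=\zeta([x,y])$. Hence $\xi([y,z])=\eta([x,z])-\zeta([x,y])$, and the sum telescopes:
\[
S=\eta([x,z])-\zeta([x,y])+\zeta([x,y])+\eta([z,x])=\eta([x,z]+[z,x])=0,
\]
with no restriction on the characteristic. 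This is exactly how the paper concludes; with this substitution your proposal becomes a complete proof along the same lines.
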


\begin{proof} 

\begin{enumerate}[wide, labelwidth=!, labelindent=0pt]
	\item [i)]
According to (\ref{18}) we have
$$\d \xi_c(t)=\d (-t\cdot c)=-[t,\d c]= [\d c,t]$$
and the equality (\ref{ZE1L}) holds. Next, we have
$$\xi_c(\d(a))=- \d (a)\cdot c=\d(c)\cdot a$$
and (\ref{ZE2L})  follows. Here we used the equality  (\ref{20l}). Finally, we have
$$\xi_c([s,t])=-[s,t]\cdot c= -(s\cdot (t \cdot c))+(t\cdot (s\cdot c))=s\cdot \xi_c(t)-t\cdot \xi_c(s).$$

	\item [ii)] We will use the definition of BCM given in Proposition \ref{22p}. Thus we only need to check the three identities listed in Proposition \ref{22p}. By definition we have
	$$\partial(\{(x,\xi),(x,\xi)\})=\partial(\xi(x))=[x,x]=0.$$
Here we used the identity (\ref{ZE1L}). Hence the identity (\ref{24}) holds. We also have
$$\{(x,\xi), \dd(a)\}+\{\dd(a),(x,\xi)\}=\xi(\d(a))+\xi_a(x)=
x\cdot a -x\cdot a=0.$$
This shows that the identity (\ref{22}) holds. Now we will prove the validity of the identity (\ref{26}). To this end, we take three elements in ${\bf Z}_0(L_*)$:
$$u=(x,\xi), \quad v=(y,\eta), \quad  w=(z,\zeta).$$
We have
$$\{v,w\}=\{(y,\eta),(z,\zeta)\}=\eta(z).$$
Hence $$\dd(\{v,w\})=(\d(\eta(z)), t\mapsto -t\cdot \eta(z))=([y,z],t\mapsto -t\cdot \eta(z)).$$
Thus,
$$\{u,\dd\{v,w\}\}=\xi([y,z]).$$
Similarly, $$ \{w,\dd \{u,v\}\}=\zeta([x,y]) \quad {\rm and} \quad \{v,\dd \{w,u\}\}=\eta[z,x].$$
Now we use the identity (\ref {ZE3L}) to obtain
$$\{u,\dd\{v,w\}\}=y\cdot \xi(z)-z\cdot \xi(y).$$
Using the identities (\ref {ZE2L}) and (\ref {ZE1L}), we can rewrite this as
$$\{u,\dd\{v,w\}\}=\eta(\partial(\xi(z)))-\zeta(\d \xi(y))=\eta([x,z])-\zeta([x,y]).$$
Thus we proved that
$$\{u,\dd\{v,w\}\}=-\{v,\dd \{w,u\}\}-\{w,\dd \{u,v\}\}$$
and (\ref{26}) follows.
\end{enumerate}
\end{proof}
\begin{De}
Let $L_*$ be a crossed module of Lie algebras. 
The BCM of Lie algebras $${\bf Z}_*(L_*)=(L_1\xto{\dd}{\bf Z}_0(L_*) )$$ constructed in Theorem \ref{clb} is called the \emph{centre} of $L_*$.
\end{De}
\subsection{The Lie algebra structure on ${\bf Z}_0(L_*) $}
\begin{Co} Let $L_*$ be a crossed module of Lie algebras. Then $$[(x,\xi),(y,\eta)]=([x,y],\theta)$$defines a Lie algebra structure on ${\bf Z}_0(L_*) $, where
$$\theta(t)= -t\cdot \xi(y)$$
and the map $\d:L_1\to L_0$ is a homomorphism of Lie algebras, where as usual
$$[a,b]=\d(a)\cdot b.$$
\end{Co}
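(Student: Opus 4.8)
The strategy is to obtain the Corollary from Proposition \ref{clb} combined with the reconstruction statement of Proposition \ref{22p}. By Proposition \ref{clb}(ii), the linear map $\dd\colon L_1\to{\bf Z}_0(L_*)$ together with the bilinear map $\{(x,\xi),(y,\eta)\}=\xi(y)$ is a braided crossed module in the sense of Proposition \ref{22p}, i.e. the data are just $\dd$ and $\{-,-\}$ subject to (\ref{22}), (\ref{24}), (\ref{26}). But Proposition \ref{22p} asserts precisely that from such data one canonically recovers Lie algebra structures on the target ${\bf Z}_0(L_*)$ and on the source $L_1$ --- via the identities (\ref{20}) and (\ref{21}) --- with respect to which $\dd$ is a homomorphism of Lie algebras. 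Hence bilinearity, antisymmetry and the Jacobi identity for the proposed bracket on ${\bf Z}_0(L_*)$ are automatic; all that remains is to check that the reconstructed brackets coincide with the explicit formulas in the statement.

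For the bracket on ${\bf Z}_0(L_*)$, identity (\ref{20}) gives $[(x,\xi),(y,\eta)]=\dd(\{(x,\xi),(y,\eta)\})=\dd(\xi(y))$. By the definition of $\dd$ in Proposition \ref{clb}, $\dd(\xi(y))=(\d(\xi(y)),\xi_{\xi(y)})$ with $\xi_{\xi(y)}(t)=-t\cdot\xi(y)$, and this pair lies in ${\bf Z}_0(L_*)$ by Proposition \ref{clb}(i) applied to $c=\xi(y)$, so the bracket is well defined. Applying (\ref{ZE1L}) to the element $(x,\xi)$ yields $\d(\xi(y))=[x,y]$, whence $[(x,\xi),(y,\eta)]=([x,y],\theta)$ with $\theta(t)=-t\cdot\xi(y)$, exactly as claimed.

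For $L_1$, identity (\ref{21}) gives $[a,b]=\{\dd(a),\dd(b)\}=\xi_a(\d(b))=-\d(b)\cdot a$, which equals $\d(a)\cdot b$ by (\ref{20l}); thus the bracket reconstructed on $L_1$ is the usual one. Finally $\d\colon L_1\to L_0$ is a homomorphism of Lie algebras: this was already recorded in the proof of Lemma \ref{20le}, and it also follows from the present picture since $\d={\sf z}_0\circ\dd$, where $\dd$ is a homomorphism by the above and ${\sf z}_0\colon(x,\xi)\mapsto x$ is visibly one by the formula $[(x,\xi),(y,\eta)]=([x,y],\theta)$ just established. The only point that needs a little care is the ordering and sign bookkeeping when reading the $L_1$-bracket off (\ref{21}), which is settled by the single application of (\ref{20l}) above; everything else is direct substitution, so this is the (mild) crux of the argument.
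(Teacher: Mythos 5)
Your proposal is correct and follows essentially the same route as the paper: invoke the reconstruction statement of Proposition \ref{22p} applied to the braided crossed module $\dd\colon L_1\to {\bf Z}_0(L_*)$ of Proposition \ref{clb}, and then identify the reconstructed brackets explicitly, using $\d\xi(y)=[x,y]$ (identity (\ref{ZE1L})) and $\xi_{\xi(y)}(t)=-t\cdot\xi(y)$ for the bracket on ${\bf Z}_0(L_*)$, and $\{\dd(a),\dd(b)\}=\xi_a(\d b)=-\d(b)\cdot a=\d(a)\cdot b$ via (\ref{20l}) for the bracket on $L_1$. The only (harmless) additions are your explicit remark that $([x,y],\theta)\in{\bf Z}_0(L_*)$ by Proposition \ref{clb}(i) and the observation about $\d={\sf z}_0\circ\dd$, which the paper leaves implicit.
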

\begin{proof} According to Proposition \ref{22p}, we obtain Lie algebra structures on $L_1$ and ${\bf Z}_0(L_*) $ by putting 
	$$[a,b]_{new}:=\{\dd(a),\dd(b)\}$$
	and
$$[(x,\xi),(y,\eta)]: =\dd \{(x,\xi),(y,\eta)\}.$$
Here we used  $[a,b]_{new}$ to denote the Lie algebra structure obtained from the bracket $\{ -,- \}$. Now we prove that the two Lie algebra structures on $L_1$ coincide. In fact, we have
$$[a,b]_{new}=\{(\d(a),\xi_a),(\d(b),\xi_b)\}=\xi_a(\d(b))=-\d(b)\cdot a=-[b,a]=[a,b].$$
Now we identify the bracket on ${\bf Z}_0(L_*) $. We have
$$[(x,\xi),(y,\eta)] =\dd (\xi(y))=(\d \xi(y), \xi_{\xi(y)}).$$
Since
$\d \xi(y)=[x,y]$ and
$$\xi_{\xi(y)}(t)=-t\cdot \xi(y),$$
the result follows.
\end{proof}

The following lemma shows that the same Lie algebra structure can be written in a slightly different form.

\begin{Le}\label{25le} If $(x,\xi)$ and $(y,\eta)$ are elements of ${\bf Z}_0(L_*) $, then
$$-t\cdot \xi(y)=\xi([y,t])-\eta([x,t])=t\cdot \eta(x).$$
\end{Le}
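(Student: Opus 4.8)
The plan is to verify the two asserted equalities separately, using the defining identities (\ref{ZE1L})--(\ref{ZE3L}) for the pairs $(x,\xi)$ and $(y,\eta)$, together with the crossed module axiom (\ref{20l}). For the first equality $-t\cdot\xi(y)=\xi([y,t])-\eta([x,t])$, I would start from the identity (\ref{ZE3L}) applied to $\xi$ with the pair of arguments $y$ and $t$, namely $\xi([y,t])=y\cdot\xi(t)-t\cdot\xi(y)$, so that it suffices to show $y\cdot\xi(t)=\eta([x,t])$. Here the key observation is that $\xi(t)\in L_1$, so by (\ref{ZE2L}) applied to $\eta$ one has $\eta(\d\xi(t))=y\cdot\xi(t)$; but by (\ref{ZE1L}) for $\xi$ we have $\d\xi(t)=[x,t]$, hence $y\cdot\xi(t)=\eta([x,t])$, as desired.

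For the second equality $\xi([y,t])-\eta([x,t])=t\cdot\eta(x)$, I would argue symmetrically. By (\ref{ZE3L}) for $\eta$ applied to $x$ and $t$ we get $\eta([x,t])=x\cdot\eta(t)-t\cdot\eta(x)$, so it suffices to show $\xi([y,t])=x\cdot\eta(t)$. Again $\eta(t)\in L_1$, so (\ref{ZE2L}) for $\xi$ gives $\xi(\d\eta(t))=x\cdot\eta(t)$, while (\ref{ZE1L}) for $\eta$ gives $\d\eta(t)=[y,t]$; combining these yields $\xi([y,t])=x\cdot\eta(t)$. Substituting back proves the second equality. Chaining the two gives $-t\cdot\xi(y)=\xi([y,t])-\eta([x,t])=t\cdot\eta(x)$.

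The computation is entirely formal, so there is no serious obstacle; the only point requiring a little care is making sure that when invoking (\ref{ZE2L}) and (\ref{ZE1L}) one is applying them to the correct pair — (\ref{ZE2L}) for $\eta$ but (\ref{ZE1L}) for $\xi$ in the first half, and the roles swapped in the second half. One should also note in passing that the equality of the two outer expressions, $-t\cdot\xi(y)=t\cdot\eta(x)$, is exactly the statement that the formula $\theta(t)=-t\cdot\xi(y)$ for the bracket in the preceding Corollary is antisymmetric in $(x,\xi)$ and $(y,\eta)$ up to the expected sign, which is why this reformulation is useful.
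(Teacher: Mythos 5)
Your proof is correct and follows essentially the same route as the paper: both halves are exactly the paper's computation, using (\ref{ZE3L}) to expand $\xi([y,t])$ (resp.\ $\eta([x,t])$) and then (\ref{ZE2L}) and (\ref{ZE1L}) to identify $y\cdot\xi(t)$ with $\eta([x,t])$ (resp.\ $x\cdot\eta(t)$ with $\xi([y,t])$), merely rephrased as ``it suffices to show'' rather than as a chain of equalities. The concluding remark about antisymmetry of the bracket on ${\bf Z}_0(L_*)$ is a correct observation, though not needed for the proof.
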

\begin{proof} We have
\begin{align*}
\xi([y,t])-\eta([x,t]) &= y\cdot \xi(t)-t\cdot \xi(y)-\eta([x,t]) \\ &= \eta(\d(\xi(t)))-t\cdot \xi(y)-\eta([x,t])\\ &= -t\cdot \xi(y).
\end{align*}
Here we first used the equality (\ref{ZE3L}), then (\ref{ZE2L}) and finally (\ref{ZE1L}). Quite similarly, we have
\begin{align*}
\xi([y,t])-\eta([x,t])&=\xi([y,t])-x\cdot \eta(t)+t\cdot \eta(x)\\ &= \xi([y,t])-\xi(\d(\eta(t)))+t\cdot \eta(x)\\ &=t\cdot \eta(x)
\end{align*}
and the proof is finished.
\end{proof} 
\subsection{The crossed module $L_*//{\bf Z}_*(L_*)$}

\begin{Pro} \label{11} \begin{enumerate}[wide, labelwidth=!, labelindent=0pt]
		 \item [i)] Let $y\in L_0$ and $(x,\xi)\in {\bf Z}_0(L_*) $. Then
$([y,x],\psi)\in {\bf Z}_0(L_*) $, where 
$$\psi(t)=t\cdot \xi(y).$$

\item [ii)] The rule  $$y\cdot (x,\xi):= ([y,x],\psi)$$
defines an action of $L_0$ on ${\bf Z}_0(L_*) $. With this action, the map
$${\bf Z}_0(L_*) \to L_0; \ \  (x,\xi)\mapsto x$$
is a crossed module of Lie algebras. 
\end{enumerate} 
\end{Pro}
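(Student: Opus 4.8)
The plan is to verify the three required claims in sequence: (i) that $([y,x],\psi)$ lies in ${\bf Z}_0(L_*)$, (ii) that $y\cdot(x,\xi):=([y,x],\psi)$ satisfies the module axiom $[y,y']\cdot(x,\xi)=y\cdot(y'\cdot(x,\xi))-y'\cdot(y\cdot(x,\xi))$, and finally (iii) that the two crossed-module axioms $(\ref{18})$ and $(\ref{19})$ hold for ${\sf z}_0\colon{\bf Z}_0(L_*)\to L_0$ with the Lie algebra structure on ${\bf Z}_0(L_*)$ provided by the corollary above. I would use Lemma \ref{20le}, so that (\ref{19}) may be replaced by the apparently weaker relation (\ref{20l}), namely ${\sf z}_0(u)\cdot v+{\sf z}_0(v)\cdot u=0$ for $u,v\in{\bf Z}_0(L_*)$; this is the most economical route.

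For part (i): the pair $([y,x],\psi)$ with $\psi(t)=t\cdot\xi(y)$ must satisfy (\ref{ZE1L})--(\ref{ZE3L}) with $x$ replaced by $[y,x]$. For (\ref{ZE1L}) I would compute $\d\psi(t)=\d(t\cdot\xi(y))=[t,\d\xi(y)]=[t,[x,y]]=[[y,x],t]$ using (\ref{18}) and (\ref{ZE1L}) for $(x,\xi)$. For (\ref{ZE2L}) I need $\psi(\d a)=\d a\cdot\xi(y)=[y]\!\cdot$-action computations: $\d a\cdot\xi(y)=[\d a,\xi(y)]$ in $L_1$, and separately $[y,x]\cdot a$ should match; here the identity $\xi(\d a)=x\cdot a$ together with the crossed-module relations in $L_1$ and the action axiom for $L_0$ on $L_1$ will be needed. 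For (\ref{ZE3L}), $\psi([s,t])=[s,t]\cdot\xi(y)$ should equal $s\cdot\psi(t)-t\cdot\psi(s)=s\cdot(t\cdot\xi(y))-t\cdot(s\cdot\xi(y))$, which is exactly the $L_0$-action axiom applied to the element $\xi(y)\in L_1$. Part (ii), the module axiom, I expect to reduce to the Jacobi identity in $L_0$ (for the first coordinate, which is immediate) together with a bilinear identity in the $\psi$-components; this latter should follow by expanding each $\psi$-term via the defining formula and invoking the $L_0$-action axiom on $L_1$ once more, possibly together with Lemma \ref{25le} to rewrite $t\cdot\xi(y)$ symmetrically.

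For part (iii): axiom (\ref{18}) reads ${\sf z}_0(y\cdot u)=[y,{\sf z}_0(u)]$ for $y\in L_0$, which is transparent since ${\sf z}_0([y,x],\psi)=[y,x]$ — but one must first make sense of "$y\in L_0$ acting" versus "${\sf z}_0(u)$ acting", i.e. check compatibility of the $L_0$-action on ${\bf Z}_0(L_*)$ with the adjoint action of ${\bf Z}_0(L_*)$ on itself coming from the bracket $[(x,\xi),(y,\eta)]=([x,y],\theta)$, $\theta(t)=-t\cdot\xi(y)$, established in the corollary; concretely I would check that when $y={\sf z}_0(y,\eta)$ the action $([y,x],\psi)$ with $\psi(t)=t\cdot\xi(y)$ agrees with $-[(x,\xi),(y,\eta)]$ up to the sign dictated by $[(y,\eta),(x,\xi)]=([y,x],\theta')$ with $\theta'(t)=-t\cdot\eta(x)$, and here Lemma \ref{25le} — which gives precisely $-t\cdot\eta(x)=t\cdot\xi(y)$... wait, it gives $-t\cdot\xi(y)=t\cdot\eta(x)$, equivalently $\psi(t)=t\cdot\xi(y)=-t\cdot\eta(x)=\theta'(t)$ — closes the loop. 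So the action of $(y,\eta)$ is the bracket, which makes (\ref{18}) automatic and identifies the precrossed structure. Axiom (\ref{20l}) then becomes ${\sf z}_0(u)\cdot v+{\sf z}_0(v)\cdot u=0$, i.e. $x\cdot(y,\eta)+y\cdot(x,\xi)=0$ in ${\bf Z}_0(L_*)$; the first coordinate is $[x,y]+[y,x]=0$, and the $\psi$-coordinate is $t\mapsto t\cdot\eta(x)+t\cdot\xi(y)$, which vanishes by Lemma \ref{25le}. The main obstacle I anticipate is purely bookkeeping in part (i): verifying (\ref{ZE2L}) for $([y,x],\psi)$ cleanly requires juggling the bracket on $L_1$, the map $\d$, and both the $L_0$-action axioms, and it is easy to lose a sign; everything else is either a direct Jacobi identity or an immediate application of Lemma \ref{25le}.
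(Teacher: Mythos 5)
Your route is correct and, in its skeleton, the same as the paper's: part i) is the same three verifications of (\ref{ZE1L})--(\ref{ZE3L}), and for part ii) you, like the paper, invoke Lemma \ref{20le} to reduce the crossed-module axioms to (\ref{18}) and (\ref{20l}), settling (\ref{20l}) with Lemma \ref{25le}. The one real divergence is that you check only the module axiom for the action and skip the paper's explicit verification of the derivation identity $y\cdot[A,A']=[y\cdot A,A']+[A,y\cdot A']$; this is legitimate, since Lemma \ref{20le} asks only for a module structure and reconstructs the bracket as ${\sf z}_0(A)\cdot A'$, and your Lemma \ref{25le} computation showing ${\sf z}_0(y,\eta)\cdot(x,\xi)=[(y,\eta),(x,\xi)]$ identifies that reconstructed bracket with the one from the Corollary --- so your argument is in fact slightly leaner than the paper's, which proves the derivation identity by hand. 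Three small repairs to your sketch: in (\ref{ZE2L}) of part i), $\partial(a)\cdot\xi(y)$ equals $[a,\xi(y)]$ by (\ref{19}), not $[\partial a,\xi(y)]$; the cleanest chain is $\partial(a)\cdot\xi(y)=-\partial(\xi(y))\cdot a=-[x,y]\cdot a=[y,x]\cdot a$, using (\ref{20l}) and (\ref{ZE1L}), and on this route $\xi(\partial a)=x\cdot a$ is not needed (the paper instead argues via (\ref{ZE3L}), (\ref{18}) and (\ref{ZE2L})). In the module axiom, the identity closing the second coordinate is not the $L_0$-action axiom nor Lemma \ref{25le} but (\ref{ZE3L}) for $\xi$, giving $t\cdot(u\cdot\xi(v))-t\cdot(v\cdot\xi(u))=t\cdot\xi([u,v])$. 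Finally, (\ref{18}) for ${\sf z}_0$ requires no compatibility with the adjoint bracket --- it is simply ${\sf z}_0(y\cdot(x,\xi))=[y,x]=[y,{\sf z}_0(x,\xi)]$; the compatibility you verify is axiom (\ref{19}), a useful extra point but logically separate.
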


\begin{proof} 
	
\begin{enumerate}[wide, labelwidth=!, labelindent=0pt]
	\item [i)]
We need to check that the pair $([y,x],\psi)$ satisfies the relations (\ref{ZE1L})-(\ref{ZE3L}). We have
	$$\d \psi(t)=\d (t\cdot \xi(y))=[t,\d (\xi(y))]=[t, [x,y]]=[[y,x],t]$$
and the relation (\ref{ZE1L}) holds. Next, we have
\begin{align*}
\psi(\d (a))&=\d(a) \cdot \xi(y)\\
&=\xi([\d(a),y])+y\cdot \xi(\d(a))\\
&=-\xi(\partial(y\cdot a))+y\cdot (x\cdot a)\\
& =-x\cdot (y\cdot a)+y\cdot (x\cdot a)\\
&=[y,x]\cdot a
\end{align*}
and the relation  	(\ref{ZE2L}) holds. Finally, we have
\begin{align*}\psi([s,t])&=[s,t]\cdot \xi(y)\\&=s\cdot (t\cdot \xi(y))-t\cdot (s\cdot\xi(y) )\\
&= s\cdot \psi(t)-t\cdot \psi(s)
\end{align*}
and 	(\ref{ZE3L}) holds.

\item [ii)] Let us first check that the above formula defines an action. This requires checking of two identities.
%
We have
\begin{align*}
u\cdot (v\cdot (x,\xi)) &= u\cdot ([v,x],t \mapsto t\cdot \xi(v) )\\
&=([u,[v,x]], t\mapsto t \cdot (u\cdot \xi(v)) ).
\end{align*}
Similarly, $v\cdot (u\cdot (x,\xi))=([v,[u,x]], t\mapsto t \cdot (v\cdot \xi(u)) )$.
Hence, we obtain
\begin{align*}u\cdot (v\cdot (x,\xi))-v\cdot (u\cdot (x,\xi))
&=([u,[v,x]], t\mapsto t \cdot (u\cdot \xi(v)) )- ([v,[u,x]], t\mapsto t \cdot (v\cdot \xi(u)) )\\&=([u,[v,x]]-[v,[u,x]], t\mapsto (t \cdot (u\cdot \xi(v))- t \cdot (v\cdot \xi(u)))\\&=([[u,v],x], t\mapsto t\cdot \xi([u,v]))
\end{align*}
and thus 
$$[u,v] \cdot (x,\xi)=u\cdot (v\cdot (x,\xi)) -v\cdot (u\cdot (x,\xi)).$$
For the second identity, observe that
\begin{align*}
y\cdot [(x,\xi),(x',\xi')] &= y\cdot ([x,x'],t\mapsto -t\cdot \xi(x'))\\
&=([y,[x,x']], t\mapsto -t\cdot (y\cdot \xi(x'))).
\end{align*}
On the other hand, $[y\cdot(x,\xi),(x',\xi')]+[(x,\xi),y\cdot (x',\xi')]$ is equal to 
$$
 ([[y,x], x'], t\mapsto -t\cdot (x'\cdot \xi(y)))+([x,[y,x']],t\mapsto -t\cdot \xi([y,x'])).$$
 The first coordinate of this expression is $$[[y,x], x']-[[y,x'],x]= [y,[x,x']],$$
 while the second coordinate is equal to
 \begin{align*}
t&\mapsto -t\cdot (x'\cdot \xi(y))-t\cdot \xi([y,x']))\\ &= -t\cdot (x'\cdot \xi(y))-t\cdot(y\cdot \xi(x'))+t\cdot (x'\cdot \xi(y))\\ &=-t\cdot(y\cdot \xi(x')).
 \end{align*}
Comparing these expressions, we see that
$$y\cdot [(x,\xi),(x',\xi')]=[y\cdot(x,\xi),(x',\xi')]+[(x,\xi),y\cdot (x',\xi')].$$
Hence $L_0$ acts on ${\bf Z}_0(L_*)$.

Thanks to Lemma \ref{20le}, it suffices to check the identities  (\ref{18}) and (\ref{20l}). Since the image of $y\cdot (x,\xi)=([y,x],\psi)$ into $L_0$ is $[y,x]$, the identity (\ref{18}) holds for  ${\bf Z}_0(L_*) \to L_0$. Finally for $A=(x,\xi)$, $A'=(x',\xi')$ we have
$$x\cdot A'+x'\cdot A=([x,x'],t\mapsto t\cdot \xi'(x))+
([x',x],t\mapsto t\cdot \xi(x'))=0$$
thanks to Lemma \ref{25le}. So the identity (\ref{20l}) also holds for ${\bf Z}_0(L_*) \to L_0$  and hence the result.

\end{enumerate} 
\end{proof}

\subsection{On homotopy groups of ${\bf Z}_*(L_*)$}\label{042se} Let $L_*$ be a crossed module. In this section we investigate $\pi_i$  of the crossed module ${\bf Z}_*(L_*)$, $i=0,1$. 
The case $i=1$ is easy and the answer is given by the following lemma.
\begin{Le} Let $L_*$ be a crossed module. Then
	$$\pi_1({\bf Z}_*(L_*) )\cong \H^0(\pi_0(L_*),\pi_1(L_*)).$$
\end{Le}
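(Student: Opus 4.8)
The goal is to identify $\pi_1({\bf Z}_*(L_*)) = \ker(\dd)$ with $\H^0(\pi_0(L_*), \pi_1(L_*))$, the submodule of $\pi_1(L_*)$-invariants under the $\pi_0(L_*)$-action. First I would unwind what $\ker(\dd)$ is: an element $c \in L_1$ lies in $\ker(\dd)$ precisely when $\dd(c) = (\partial(c), \xi_c) = (0, 0)$ in ${\bf Z}_0(L_*)$, i.e.\ when $\partial(c) = 0$ \emph{and} $\xi_c(t) = -t\cdot c = 0$ for all $t \in L_0$. The first condition says $c \in \ker(\partial) = \pi_1(L_*)$; the second says $t\cdot c = 0$ for all $t \in L_0$. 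So $\ker(\dd) = \{c \in \pi_1(L_*) : L_0 \cdot c = 0\}$ as a vector space.

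Next I would match this with the cohomological description. By definition $\H^0(\pi_0(L_*), \pi_1(L_*))$ is the set of $\bar c \in \pi_1(L_*)$ annihilated by the $\pi_0(L_*)$-action, and recall (from the discussion after the definition of crossed module) that the $\pi_0(L_*)$-module structure on $\pi_1(L_*)$ is induced by the $L_0$-action: for $\bar t \in \pi_0(L_*) = \cok(\partial)$ represented by $t \in L_0$, one has $\bar t \cdot \bar c = t \cdot c$, and this is well-defined because $\partial(a) \cdot c = [a, c] = 0$ for $a \in L_1$ and $c \in \ker(\partial)$ (as $\pi_1$ is central in $L_1$). Hence the condition ``$L_0 \cdot c = 0$'' on an element $c \in \pi_1(L_*)$ is literally the same as ``$\pi_0(L_*) \cdot \bar c = 0$''. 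This gives the identification $\ker(\dd) = \H^0(\pi_0(L_*), \pi_1(L_*))$, and since all the structure involved is linear this is the desired isomorphism of vector spaces (and, if one wishes, of $\pi_0$-modules with trivial action, consistent with part ii) of the main Lemma of Section \ref{6sec}).

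I expect no serious obstacle here: the only point requiring a little care is checking that the $\pi_0(L_*)$-action on $\pi_1(L_*)$ used in defining $\H^0$ is exactly the one descending from $L_0$, which is already recorded in the text. The proof is essentially a matter of writing out $\ker(\dd)$ and comparing definitions, so the argument is short.
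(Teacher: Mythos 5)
Your proposal is correct and follows essentially the same argument as the paper: unwind $\ker(\dd)$ as the set of $c\in L_1$ with $\d(c)=0$ and $t\cdot c=0$ for all $t\in L_0$, and identify this with the $\pi_0(L_*)$-invariants of $\pi_1(L_*)$, using the fact (recorded after the definition of crossed modules) that the $\pi_0$-action is the one induced from $L_0$. The extra well-definedness check you include is a harmless elaboration of what the paper takes as already established.
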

\begin{proof} By definition $a\in \pi_1({\bf Z}_*(L_*) )$ iff $\dd(a)=(0,0)$, thus when $\d (a)=0$ and $\xi_a(t)=0$ for all $t\in \G_0$. These conditions are equivalent to the conditions $a\in \pi_1(L_*)$ and $t\cdot a=0$ for all $t\in L_0$ and hence the result.
\end{proof}

To state our result on $\pi_0({\bf Z}_*(L_*))$ we need to fix some notation. 

For a Lie algebra  $L$, we let ${\sf Z}(L)$ denote the centre of $L$, which is the set of elements $c\in L$ for which $[c,x]=0$ for all $x\in L$. Moreover, if $K$ is a Lie algebra on which $L$ acts, we set
$$st_K(L)=\{x\in L|\, x\cdot k=0 \ {\rm for \ all}\ k\in K\}.$$

The intersection of ${\sf Z}(L)$ and $st_K(L)$ is denoted by ${\sf Z}_{K}(L)$. Thus $c\in {\sf Z}_{K}(L)$ iff $c\cdot k=0$ and $[x,c]=0$ for all $x\in L$ and $k\in K$.

Let $L_*$ be a crossed module. In this case we have defined
$${\sf Z}_{\pi_1(L_*)}(\pi_0(L_*)) \ \ {\rm and}
\ \  \ {\sf Z}_{L_1}(L_0).$$
We will come back to the second one in the next section. Now we relate ${\sf Z}_{\pi_1(L_*)}(\pi_0(L_*))$ to $\pi_0({\bf Z}_*(L_*))$. To this end denote the class of $x\in L_0$ in $\pi_0(L_*)$ by  ${\sf cl}(x)$. Take now an element $(x,\xi)\in {\bf Z}_0(L_*)$. Accordingly, ${\sf cl}(x,\xi)$ denotes the class of $(x,\xi)$ in $\pi_0({\bf Z}_*(L_*))$. 

\begin{Le} \label{14l} \begin{itemize}[wide, labelwidth=!, labelindent=0pt]
\item [i)] Let $(x,\xi)\in {\bf Z}_0(L_*)$. Then the class ${\sf cl}(x)\in \pi_0(L_*)$ belongs to $${\sf Z}_{\pi_1(L_*)}(\pi_0(L_*)).$$ 
Thus one has the well defined homomorphism
$${\bf Z}_0(L_*)\xto{\omega'} {\sf Z}_{\pi_1(L_*)}(\pi_0(L_*))$$
given by $\omega'(x,\xi):={\sf cl}(x).$

\item [ii)] The composite map
	$$L_1\xto{\dd} {\bf Z}_0(L_*)\xto{\omega'} {\sf Z}_{\pi_1(L_*)}(\pi_0(L_*))$$
	is trivial. Hence the map $\omega'$ induces a  homomorphism
	$$\omega: \pi_0({\bf Z}_*(L_*))\to  {\sf Z}_{\pi_1(L_*)}(\pi_0(L_*)).$$
	
\item [iii)] For a $1$-cocycle $\phi: \pi_0(L_*)\to \pi_1(L_*)$, the pair $(0,\tilde{\phi})\in {\bf Z}_0(L_*)$, where $\tilde{\phi}:L_0\to L_1$ is the composite map
$$L_0\to \pi_0(L_*)\xto{\phi} \pi_1(L_*)\hookrightarrow L_1.$$
Moreover, the assignment $\phi\mapsto {\sf cl}(0,\tilde{\phi})$ induces a  homomorphism
$$f:   H^1(\pi_0(L_*), \pi_1(L_*))\to \pi_0({\bf Z}_*(L_*)).$$

\item [iv)]  These maps fit in an exact sequence
$$0\to \H^1(\pi_0(L_*), \pi_1(L_*))\xto{f} \pi_0({\bf Z}_*(L_*))\xto{\omega}  {\sf Z}_{\pi_1(L_*)}(\pi_0(L_*)).$$
\end{itemize}
\end{Le}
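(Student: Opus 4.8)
The plan is to dispatch parts i)--iii) by unwinding the defining relations (\ref{ZE1L})--(\ref{ZE3L}) and to put the real work into the exactness statement iv). I record three standing observations. First, ${\bf Z}_0(L_*)$ is a linear subspace of the space of all pairs $(x,\xi)$ with $x\in L_0$ and $\xi\colon L_0\to L_1$ linear, since the three defining conditions are linear in $(x,\xi)$; so differences of elements of ${\bf Z}_0(L_*)$ again lie in ${\bf Z}_0(L_*)$. Second, ${\sf Z}_{\pi_1(L_*)}(\pi_0(L_*))$ is a subalgebra of $\pi_0(L_*)$ contained in its centre, hence abelian. Third, $\pi_0({\bf Z}_*(L_*))$ is abelian, being $\pi_0$ of the braided crossed module ${\bf Z}_*(L_*)$ (Proposition \ref{clb}). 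Consequently, once any map among these objects is shown to be linear, it is automatically a homomorphism of Lie algebras, and exactness may be checked as an exact sequence of vector spaces.

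For i), let $(x,\xi)\in{\bf Z}_0(L_*)$. Centrality of ${\sf cl}(x)$ in $\pi_0(L_*)=\cok(\partial)$ follows from (\ref{ZE1L}), since $[x,t]=\partial\xi(t)\in\im(\partial)$ for every $t$. Triviality of the action of ${\sf cl}(x)$ on $\pi_1(L_*)=\ker(\partial)$ follows from (\ref{ZE2L}): if $\partial a=0$ then $x\cdot a=\xi(\partial a)=\xi(0)=0$. Hence $(x,\xi)\mapsto{\sf cl}(x)$ lands in ${\sf Z}_{\pi_1(L_*)}(\pi_0(L_*))$ and, being linear, is the desired homomorphism $\omega'$. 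For ii), $\omega'(\dd(c))={\sf cl}(\partial(c))=0$, so $\omega'$ annihilates $\im(\dd)$ and factors through $\pi_0({\bf Z}_*(L_*))=\cok(\dd)$ to give $\omega$.

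For iii), given a $1$-cocycle $\phi\colon\pi_0(L_*)\to\pi_1(L_*)$ I verify that $(0,\tilde\phi)$ satisfies (\ref{ZE1L})--(\ref{ZE3L}) with first coordinate $0$: (\ref{ZE1L}) holds since $\tilde\phi$ is valued in $\ker(\partial)$; (\ref{ZE2L}) holds since $\tilde\phi$ factors through ${\sf cl}$ and $\partial a$ has trivial class; and (\ref{ZE3L}) is exactly the Chevalley--Eilenberg cocycle identity for $\phi$ pulled back along ${\sf cl}\colon L_0\to\pi_0(L_*)$, using that the $\pi_0(L_*)$-module structure on $\pi_1(L_*)$ is the one induced from the $L_0$-action on $L_1$. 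For the descent to $H^1$: if $\phi\colon m\mapsto m\cdot b$ is a coboundary with $b\in\pi_1(L_*)\subseteq L_1$ (so $\partial b=0$), then $\tilde\phi(t)=t\cdot b$ while $\dd(b)=(\partial b,\xi_b)=(0,\,t\mapsto-t\cdot b)$ by Proposition \ref{clb}, so $(0,\tilde\phi)=-\dd(b)\in\im(\dd)$ and ${\sf cl}(0,\tilde\phi)=0$. Linearity in $\phi$ then gives the homomorphism $f$.

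Part iv) is the crux. Injectivity of $f$: if ${\sf cl}(0,\tilde\phi)=0$ then $(0,\tilde\phi)=\dd(c)=(\partial c,\xi_c)$ for some $c\in L_1$, whence $\partial c=0$ and $\tilde\phi=\xi_c$, i.e.\ $\phi(m)=-m\cdot c$ for all $m\in\pi_0(L_*)$; thus $\phi$ is a coboundary and $[\phi]=0$. The inclusion $\im f\subseteq\ker\omega$ is the one-liner $\omega(f([\phi]))=\omega'(0,\tilde\phi)={\sf cl}(0)=0$. The substantive step is $\ker\omega\subseteq\im f$: take ${\sf cl}(x,\xi)$ with ${\sf cl}(x)=0$, write $x=\partial(c)$, and replace $(x,\xi)$ by the homologous element $(x,\xi)-\dd(c)=(0,\psi)$ with $\psi=\xi-\xi_c$. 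Then the membership $(0,\psi)\in{\bf Z}_0(L_*)$ unwinds as follows: (\ref{ZE1L}) forces $\psi(L_0)\subseteq\ker(\partial)=\pi_1(L_*)$; (\ref{ZE2L}) forces $\psi$ to vanish on $\im(\partial)$, so $\psi$ descends to a linear map $\bar\psi\colon\pi_0(L_*)\to\pi_1(L_*)$; and (\ref{ZE3L}) then says exactly that $\bar\psi$ is a $1$-cocycle. Since $\bar\psi$ pulls back along ${\sf cl}$ to $\psi$, the pair $(0,\tilde{\bar\psi})$ from iii) equals $(0,\psi)$, and so $f([\bar\psi])={\sf cl}(0,\psi)={\sf cl}(x,\xi)$, as required. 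I expect the main obstacle to be precisely this dictionary --- matching the three identities (\ref{ZE1L})--(\ref{ZE3L}) for a pair with vanishing first coordinate against the three conditions ``valued in $\pi_1$'', ``descends to $\pi_0$'', and ``is a $1$-cocycle'' --- together with the routine but indispensable bookkeeping that $\pi_0(L_*)$ acts on $\pi_1(L_*)$ through the restriction of the $L_0$-action; everything else is direct substitution into (\ref{ZE1L})--(\ref{ZE3L}) or a one-line diagram chase.
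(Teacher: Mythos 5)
Your proof is correct and follows essentially the same route as the paper: the same verifications of (\ref{ZE1L})--(\ref{ZE3L}) in parts i)--iii), and the same key step in iv) of replacing $(x,\xi)$ by $(x,\xi)-\dd(c)$ to reduce to a pair with vanishing first coordinate, which is then recognised as $(0,\tilde{\phi})$ for a $1$-cocycle $\phi$. Your use of the linearity of ${\bf Z}_0(L_*)$ to avoid re-checking the defining identities, and your sign $(0,\tilde\phi)=-\dd(b)$ in the coboundary step (the paper writes $\dd(b)=(0,\tilde\phi)$, a harmless slip given $\xi_b(t)=-t\cdot b$), are only cosmetic streamlinings of the paper's argument.
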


\begin{proof} \begin{itemize}[wide, labelwidth=!, labelindent=0pt]

\item [i)] In fact, according to the equation (\ref{ZE1L}) of Definition \ref{zl}, $[x,y]=\d \xi(y)$ and hence ${\sf cl}([x,y])=0$ in  $\pi_0(L_*)$. It follows that ${\sf cl}(x)$ is central in $\pi_0(L_*)$. Moreover, the equation (\ref{ZE2L}) of Definition \ref{zl} tells us that $\xi(\d a)=x\cdot a$. In particular, if $a\in\pi_1(L_*)$ (i.e. $\d (a)=0$) then $x\cdot a=0$ and the result follows. 

\item [ii)]   Take $a\in L_1$. By construction $\dd(a)=(\d(a),\xi_a)$. Hence $$\omega'\dd (a) ={\sf cl}(\d(a))=0.$$ 

\item [iii)] Since $\phi$ is a 1-cocycle, $\tilde{\phi}$ satisfies the condition (\ref{ZE3L}) of Definition \ref{zl}. Next, the values of $\tilde{\phi}$ belong to $\pi_1(L_*)$, so $\d \tilde{\phi}=0$ and the condition (\ref{ZE1L}) follows. Finally, $\tilde{\phi}(\d a)=\phi({\sf cl}(\d a))=\phi(0)=0$ and (\ref{ZE2L}) also holds. It remains to show that if $\phi(t)=t\cdot b$, 
for an element $b\in\pi_1(L_*)$, then ${\sf cl}(0,\tilde{\phi})=0$, but this follows from the fact that $\dd(b)=(0,\tilde{\phi}).$

\item [iv)] Exactness at $\H^1(\pi_0(L_*), \pi_1(L_*))$: Assume $\phi:\pi_0(L_*)\to \pi_1(L_*)$ is a 1-cocycle such that ${\sf cl}(0,\tilde{\phi})$ is the trivial element in $\pi_0({\bf Z}_*(L_*))$. That is, there exists a $c\in L_1$ such that $\dd(c)=(0,\tilde{\phi})$. Thus $\d (c)=0$ and $\tilde{\phi}(t)=-t\cdot c$.
So $c\in \pi_1(L_*)$ and the second equality implies that the class of \(\phi\) is zero in $\H^1(\pi_0(L_*), \pi_1(L_*))$, proving that $f$ is a monomorphism.

Exactness at  $\pi_0({\bf Z}_*(L_*))$: First take a cocycle $\phi:\pi_0(L_*)\to \pi_1(L_*)$. Then
$$\omega\circ f({\sf cl}(\phi)) =\omega({\sf cl}(0,\tilde{\phi}))
={\sf cl}(0)=0.$$
Take now an element $(x,\xi)\in {\bf Z}_0(L_*)$ such that ${\sf cl}(x,\xi)\in \ker(\omega)$. Thus $x=\d(a)$ for $a\in L_1$. Then ${\sf cl}(x,\xi)={\sf cl}(y,\eta)$, where \[(y,\eta)=(x,\xi)-\dd(a) = (x,\xi)-(\d(a), t\mapsto -t\cdot a)= (x-\d(a), \xi(t)+t\cdot a).\] Since $y=x-\d (a)=0$, we see that $\d \eta(t)=[y,t]=[0,t]=0$ and \[\eta(\d c)=\xi(\d(c))+\d(c)\cdot a = x\cdot c+\d(c)\cdot a=0\] as \(x=\d(a)\). So $\eta= \tilde{\phi}$, where $\phi:\pi_0(L_*)\to \pi_1(L_*)$ is a 1-cocycle. Thus $f({\sf cl}(\phi))={\sf cl}(0,\eta)={\sf cl}(x,\xi)$ and exactness at $\pi_0({\bf Z}_*(L_*))$ follows. 

\end{itemize}	
\end{proof}

 The map $\omega$ is not surjective in general. That is, for an element $x\in L_0$ such that ${\sf cl}(x)\in {\sf Z}_{\pi_1(L_*)}(\pi_0(L_*))$, there is in general no linear map $\psi:L_0\to L_1$ satisfying the conditions (\ref{ZE1L})-(\ref{ZE3L}) of the Definition \ref{zl}. However, there is a function satisfying (\ref{ZE1L}) and (\ref{ZE2L}), see the following Proposition. We will use this observation to extend the exact sequence constructed in Lemma \ref{14l}.

\begin{Pro}\label{15pr} 
\begin{enumerate}[wide, labelwidth=!, labelindent=0pt]
	
\item [i)] Take an element $x\in L_0$ such that ${\sf cl}(x)\in {\sf Z}_{\pi_1(L_*)}(\pi_0(L_*))$. Then there exists a linear map $\psi:L_0\to L_1$ such that $\d \psi(t)=[x,t]$ and $\psi(\d a)=x\cdot a $ for all $t\in L_0$ and $a\in L_1$. 

\item [ii)] The expression
\[\bar{\theta}(s,t):=s\cdot \psi(t)- t\cdot \psi(s)-\psi([s,t])\]
is skew-symmetric, belongs to $\pi_1(L_*)$ and vanishes if \(s = \d(a)\) or \(t=\d(a)\) for some \(a\in L_1\). Hence, 
\(\bar{\theta}\) defines a map \(\Lambda ^2\pi_0(L_*)\xrightarrow{\tilde{\theta}} \pi_1(L_*)\).

\item [iii)] The map \(\tilde{\theta}\) is a $2$-cocycle. That is, it satisfies the relation
\[s\cdot \tilde{\theta}(t,r) -t\cdot \tilde{\theta}(s,r)+r\cdot \tilde{\theta}(s,t)=\tilde{\theta}([s,t],r)-\tilde{\theta}([s,r],t)+\tilde{\theta}([t,r],s).\]

\item [iv)] The class $\theta\in \H^2(\pi_0(L_*),\pi_1(L_*))$ is independent of the choice of $\psi$. The assignment $x\mapsto \theta$ defines a group homomorphism
$$g: {\sf Z}_{\pi_1(L_*)}(\pi_0(L_*))\to \H^2(\pi_0(L_*), \pi_1(L_*)) .$$
\end{enumerate}
\end{Pro}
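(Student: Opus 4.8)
The plan is to treat parts i)--iv) in turn, with the bulk of the work being bookkeeping about how the chosen lift $\psi$ depends on auxiliary choices. For part i), the point is that ${\sf cl}(x)\in {\sf Z}_{\pi_1(L_*)}(\pi_0(L_*))$ exactly says that $[x,t]\in\im(\d)$ for all $t$ and that $x$ kills $\ker(\d)$. Choose a linear section $\sigma:L_0\to L_1$ of $\d$ restricted to $\im(\d)$ is not quite enough, so instead I would proceed as follows: since $[x,t]\in\im(\d)$, pick (by choosing a linear splitting of the surjection $L_1\to\im(\d)$) a linear map $\rho:L_0\to L_1$ with $\d\rho(t)=[x,t]$. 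This $\rho$ need not satisfy (\ref{ZE2L}), but $\rho(\d a)-x\cdot a$ lies in $\ker(\d)=\pi_1(L_*)$ for all $a$ (apply $\d$ and use (\ref{18})), and moreover it depends only on $\d(a)\in\im(\d)$ because if $\d a=0$ then $x\cdot a=0$ by hypothesis and $\rho(0)=0$. So $a\mapsto \rho(\d a)-x\cdot a$ factors through a linear map $\lambda:\im(\d)\to\pi_1(L_*)$; extend $\lambda$ arbitrarily (linearly) to $\tilde\lambda:L_0\to\pi_1(L_*)\hookrightarrow L_1$ and set $\psi:=\rho-\tilde\lambda$. Then $\d\psi=\d\rho=[x,-]$ still holds since $\tilde\lambda$ lands in $\ker\d$, and $\psi(\d a)=\rho(\d a)-\lambda(\d a)=x\cdot a$, giving both required identities.

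For part ii), the skew-symmetry of $\bar\theta(s,t)=s\cdot\psi(t)-t\cdot\psi(s)-\psi([s,t])$ is immediate from the skew-symmetry of $[s,t]$. To see $\bar\theta(s,t)\in\pi_1(L_*)$, apply $\d$: using $\d\psi(t)=[x,t]$ and the action identity $\d(s\cdot\psi(t))=[s,\d\psi(t)]=[s,[x,t]]$, one gets $\d\bar\theta(s,t)=[s,[x,t]]-[t,[x,s]]-[x,[s,t]]$, which vanishes by the Jacobi identity. For the vanishing when $s=\d(a)$: substitute and use $\psi(\d a)=x\cdot a$ together with the module and crossed-module identities to reduce $\bar\theta(\d a,t)$ to an expression that is zero; the analogous statement for $t=\d(a)$ follows from skew-symmetry. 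This vanishing, plus the fact that $\bar\theta$ takes values in the trivial-module-action part $\pi_1(L_*)$ (on which $\im(\d)$ acts trivially since $\pi_1$ is central), is exactly what is needed for $\bar\theta$ to descend to $\tilde\theta:\Lambda^2\pi_0(L_*)\to\pi_1(L_*)$.

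For part iii), the $2$-cocycle identity is a direct calculation: expand each $\tilde\theta([s,t],r)$ etc. using the definition of $\bar\theta$ on representatives, and repeatedly apply the module identity $[s,t]\cdot v=s\cdot(t\cdot v)-t\cdot(s\cdot v)$ together with the Jacobi identity in $L_0$; the $\psi$-terms organise themselves into a telescoping cancellation, and the remaining terms are precisely the Chevalley--Eilenberg coboundary relation. For part iv), suppose $\psi'$ is another lift; then $\psi-\psi'$ has $\d(\psi-\psi')=0$ and $(\psi-\psi')(\d a)=0$, so $\psi-\psi'$ factors through a linear map $\mu:\pi_0(L_*)\to\pi_1(L_*)$, i.e. a $1$-cochain. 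A short computation shows $\bar\theta_\psi-\bar\theta_{\psi'}$ equals the Chevalley--Eilenberg differential of $\mu$, so the class $\theta\in\H^2(\pi_0(L_*),\pi_1(L_*))$ is well defined. Linearity of $x\mapsto\theta$ in $x$ follows because one may choose the lift $\psi$ linearly in $x$ (the construction in part i) can be carried out so that $\psi_{x+x'}=\psi_x+\psi_{x'}$ up to a $1$-cochain correction, which does not affect the class), giving the homomorphism $g$. The main obstacle is part i): producing a lift $\psi$ satisfying both (\ref{ZE1L}) and (\ref{ZE2L}) simultaneously requires the careful factorisation argument above, and one must be slightly attentive that the two conditions are compatible --- which is precisely guaranteed by the hypothesis ${\sf cl}(x)\in {\sf Z}_{\pi_1(L_*)}(\pi_0(L_*))$ rather than merely ${\sf cl}(x)$ being central.
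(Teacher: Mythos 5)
Your proposal is correct and follows essentially the same route as the paper: parts ii)--iv) match the paper's argument (skew-symmetry, $\d\bar{\theta}=0$ via Jacobi, vanishing on $\im(\d)$ via (\ref{ZE2L}) and (\ref{20l}), the cocycle identity left as a direct computation, and $\bar{\theta}_\psi-\bar{\theta}_{\psi'}$ being the coboundary of the $1$-cochain coming from $\psi-\psi'$). The only real difference is in part i), and it is cosmetic: the paper constructs $\psi$ directly on a splitting $L_0=\im(\d)\oplus T$, setting $\psi(\d a)=x\cdot a$ on $\im(\d)$ (well defined because $x$ annihilates $\pi_1(L_*)$), whereas you first take an arbitrary linear lift $\rho$ of $t\mapsto[x,t]$ and then correct it by a $\pi_1(L_*)$-valued map -- an equivalent use of the same hypothesis.
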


\begin{proof}  

\begin{enumerate}[wide, labelwidth=!, labelindent=0pt]
	
\item [i)] Since \({\sf cl}(x)\) is central in \(\pi_0(L_*)\), for each $t\in L_0$ there exists
an element $a_t\in L_1$ such that $\d(a_t)=[x,t]$. We choose a linear splitting $L_0=\im(\d)\oplus T$ and a linear basis $(t_i)$ of $T$. The assignment $t_i\mapsto a_{t_i}$ can be extended as a linear map $\psi:T\to L_1$ for which $\d(\psi(t))=[x,t]$ holds for all $t\in T$. To define $\psi$ on whole $L_0$, we first take $x\in\im(\d)$. In this case $x=\d a$ and we can set $\psi(x)= x\cdot  a$. This is well-defined since if $\d a= \d b$ we will have $a=b+c$, where $\d c=0$. It follows that $c\in \pi_1(L_*)$. By our assumption on $x$ we also have $x\cdot c=0$. It follows that $x\cdot a=x\cdot b$. This shows that $\psi$ is well defined on $\im(\d)$.
Since $L_0$ is the direct sum of $T$ and $\im(d)$ we can extend $\psi$ uniquely on $L_0$. It remains to check that $\d(\psi(t))=[x,t]$ holds for all $t\in L_0$.  The required identity is linear on $t$ and since it holds when $t\in T$, we can assume that $t=\d(a)$. In this case we have
$$\d(\psi(t))=\d(\psi(\d(a))=\d(x\cdot a)=[x,\d(a)]=[x,t]$$
and the result follows.

\item [ii)] 
We have
\begin{align*} \bar{\theta}(t,s)&=t\cdot \psi(s)-s\cdot\psi(t)-\psi([t,s])\\
	&= -(s\cdot \psi(t)-t\cdot\psi(s)-\psi([s,t])= -\bar{\theta}(s,t).
\end{align*}
We also have
\begin{align*} \d(\bar{\theta}(s,t))&=\d (s\cdot \psi(t)-t\cdot\psi(s)-\psi([s,t]))\\
&= [s,[x,t]]-[t,[x,s]]-[x,[s,t]]\\
&=0.
\end{align*}
Finally, let \(t=\d(a)\) for some \(a\in L_1\). Then
\[\bar{\theta}(s, \d(a))=s\cdot \psi(\d(a))-\d(a)\cdot\psi(s)-\psi([s, \d(a)]).\]
We have \(\psi(\d(a)) = x\cdot a\) because \(\psi\) satisfies equation (\ref{ZE2L}). Then by equation (\ref{20l}) and because \(\psi\) satisfies equation (\ref{ZE1L}), we have \(-\d(a)\cdot\psi(s) = \d(\psi(s))\cdot a = [x,s]\cdot a\). Finally, \(-\psi([s, \d(a)]) = -\psi(\d(s\cdot a)) = -x\cdot (s\cdot a)\) because \(\d\) is a Lie module homomorphism and \(\psi\) satisfies equation (\ref{ZE2L}). Thus,
\[\bar{\theta}(s, \d(a))=s\cdot (x\cdot a) + [x,s]\cdot a-x\cdot (s\cdot a)=0.\]
Hence, \(\tilde{\theta}:\Lambda ^2\pi_0(L_*)\to \pi_1(L_*)\) is a well-defined map. 

\item [iii)] The 2-cocycle condition follows from direct computation.

\item [iv)] If $\psi$ and $\psi'$ both satisfy the conditions in i), then $\psi-\psi'$ vanishes on $\im(\d)$ and lies in $\ker(\d)=\pi_1(L_*)$. Thus we obtain a well-defined map $h:\pi_0(L_*)\to \pi_1(L_*)$ such that  $\psi-\psi'$ is equal to the composite map $L_0\twoheadrightarrow \pi_0(L_*)\xto{h}\pi_1(L_*)\hookrightarrow L_1$. From this the result follows, because  $\bar{\theta}-\bar{\theta'}=d(h)$, where $\theta'$ is the function corresponding to $\psi'$ as in ii) and $$d:\hom(\pi_0(L_*),\pi_1(L_*))\to \hom(\Lambda ^2\pi_0(L_*),\pi_1(L_*))$$ is the coboundary map in the standard complex computing the Lie algebra cohomology. 
\end{enumerate}
\end{proof}

\begin{Co}\label{12} Let $L_*$ be a crossed module of Lie algebras. Then one has an exact sequence
$$0\to \H^1(\pi_0(L_0), \pi_1(L_*))\xto{f} \pi_0({\bf Z}_*(L_*))\xto{\omega}  {\sf Z}_{\pi_1(L_*)}(\pi_0(L_*)) \xto{g} \H^2(\pi_0(L_*), \pi_1(L_*)).$$
 
\end{Co}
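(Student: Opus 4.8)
The plan is to assemble Corollary \ref{12} by splicing together the exact sequence of Lemma \ref{14l}iv) with the homomorphism $g$ constructed in Proposition \ref{15pr}iv). From Lemma \ref{14l}iv) we already have exactness of
\[0\to \H^1(\pi_0(L_*),\pi_1(L_*))\xto{f}\pi_0({\bf Z}_*(L_*))\xto{\omega}{\sf Z}_{\pi_1(L_*)}(\pi_0(L_*)),\]
so the only new thing to verify is exactness at ${\sf Z}_{\pi_1(L_*)}(\pi_0(L_*))$, i.e. that $\im(\omega)=\ker(g)$.

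First I would unwind what $\ker(g)$ means. Given $x\in L_0$ with ${\sf cl}(x)\in{\sf Z}_{\pi_1(L_*)}(\pi_0(L_*))$, Proposition \ref{15pr}i) supplies a linear map $\psi\colon L_0\to L_1$ with $\d\psi(t)=[x,t]$ and $\psi(\d a)=x\cdot a$, and $g({\sf cl}(x))$ is the class in $\H^2$ of the $2$-cocycle $\tilde\theta$ induced by $\bar\theta(s,t)=s\cdot\psi(t)-t\cdot\psi(s)-\psi([s,t])$. So $g({\sf cl}(x))=0$ exactly when there is a linear map $h\colon\pi_0(L_*)\to\pi_1(L_*)$ with $\bar\theta=d(h)$, where $d$ is the Chevalley--Eilenberg coboundary; explicitly $d(h)(s,t)=s\cdot h({\sf cl}(t))-t\cdot h({\sf cl}(s))-h({\sf cl}[s,t])$.

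The key step is then: if $g({\sf cl}(x))=0$, replace $\psi$ by $\psi':=\psi-\tilde h$, where $\tilde h$ is the composite $L_0\twoheadrightarrow\pi_0(L_*)\xto{h}\pi_1(L_*)\hookrightarrow L_1$. Since $\tilde h$ has values in $\ker(\d)$ and vanishes on $\im(\d)$, the new map $\psi'$ still satisfies $\d\psi'(t)=[x,t]$ and $\psi'(\d a)=x\cdot a$ (using that $\tilde h$ kills $\d a$); and by construction $\bar\theta'(s,t)=\bar\theta(s,t)-d(h)(s,t)=0$, i.e. $\psi'([s,t])=s\cdot\psi'(t)-t\cdot\psi'(s)$. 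That is precisely the third defining identity (\ref{ZE3L}), so $(x,\psi')\in{\bf Z}_0(L_*)$ and $\omega({\sf cl}(x,\psi'))={\sf cl}(x)$, giving $\ker(g)\subseteq\im(\omega)$. For the reverse inclusion, $\omega\circ\dots$: if ${\sf cl}(x,\xi)\in\pi_0({\bf Z}_*(L_*))$ then one may take $\psi=\xi$ itself in the definition of $g$, whence $\bar\theta\equiv0$ and $g(\omega({\sf cl}(x,\xi)))=g({\sf cl}(x))=0$; one should also note that $g$ is well-defined on ${\sf cl}(x)$, so that the value does not depend on which representative of the class in $\pi_0({\bf Z}_*(L_*))$ we picked (this is already guaranteed since $g$ is defined on ${\sf Z}_{\pi_1(L_*)}(\pi_0(L_*))$ and $\omega$ lands there). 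Finally, one remarks $g$ is a group homomorphism by Proposition \ref{15pr}iv), completing the claim; strictly the statement writes $\H^1(\pi_0(L_0),\pi_1(L_*))$, which should read $\H^1(\pi_0(L_*),\pi_1(L_*))$ as in Lemma \ref{14l}.

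I do not expect any serious obstacle: the entire content has been front-loaded into Lemma \ref{14l} and Proposition \ref{15pr}, and the corollary is a two-line concatenation. The only point requiring a moment's care is the ``replace $\psi$ by $\psi'$'' argument showing $\ker(g)\subseteq\im(\omega)$ — one must check that subtracting a $\ker(\d)$-valued map pulled back from $\pi_0(L_*)$ preserves conditions (\ref{ZE1L}) and (\ref{ZE2L}) while killing the obstruction $\bar\theta$ — but this is exactly the computation already carried out, in the opposite direction, in the proof of Proposition \ref{15pr}iv), so it is routine.
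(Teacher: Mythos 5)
Your proposal is correct and follows essentially the same route as the paper: exactness at ${\sf Z}_{\pi_1(L_*)}(\pi_0(L_*))$ is obtained by taking $\psi=\xi$ to see $g\circ\omega=0$, and by correcting the $\psi$ of Proposition \ref{15pr}i) by the pullback of a cobounding $1$-cochain to land in ${\bf Z}_0(L_*)$, exactly as in the paper's proof. Your remark that $\H^1(\pi_0(L_0),\pi_1(L_*))$ should read $\H^1(\pi_0(L_*),\pi_1(L_*))$ is also accurate.
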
 
%
\begin{proof} According to Lemma \ref{14l} we only need to show exactness at ${\sf Z}_{\pi_1(L_*)}(\pi_0(L_*))$. To this end, take $(x,\xi)\in {\bf Z}_0(L_*)$. Since $\omega'(x,\xi)={\sf cl}(x)$, we can choose $\psi=\xi$ for $g({\sf cl}(x))$. Clearly $\bar{\theta}=0$ for this $\psi$ and hence $g\circ \omega=0$. 

Take now $x\in L_0$ such that ${\sf cl}(x)\in {\sf Z}_{\pi_1(L_*)}(\pi_0(L_*))$. Assume $g({\sf cl}(x))=0$. We have to show that there exists a map $\xi:L_0\to L_1$ such that $(x,\xi) \in 
{\bf Z}_0(L_*)$. According to part i) of Proposition \ref{15pr}, we can choose a map $\psi:L_0\to L_1$ such that the pair $(x,\psi)$ satisfies all requirements in Definition \ref{zl} except perhaps the relation (\ref{ZE3L}). By construction, $g(x)$ is the class of the 2-cocycle $\bar{\theta}$ in $\H^2(\pi_0(L_*), \pi_1(L_*))$. As this class is zero, $\bar{\theta}$ is a coboundary and therefore there exists a function $\phi: \pi_0(L_*)\to \pi_1(L_*)$ for which $\bar{\theta} (s,t)=s\cdot \phi(t)-t \cdot \phi(s)-\phi([s,t])$.
Since $\phi$ takes values in $\pi_1(L_*)$, the map $\psi'(t)=\psi(t)-\phi(t)$ also satisfies the conditions in iii), thus we can replace $\psi$ by $\psi'$. The above equation shows that the corresponding $2$-cocycle $\bar{\theta'}$ vanishes, meaning that $\psi'$ is a $1$-cocycle. Thus $(x,\psi')\in {\bf Z}_0(L_*)$ and exactness follows.  

\end{proof}

\subsection{Relation to nonabelian cohomology}\label{43se} In this section we relate our centre to the  non-abelian cohomology of Lie algebras developed by D. Guin \cite{dg1}.

Let $L_*$ be a crossed module. The group $\H^0(L_0,L_*)$ is defined by
$$\H^0(L_0,L_*)=\{a\in L_1|\,\d a=0 \ {\rm and} \  x\cdot a=0\ {\rm for \ all}\ x\in L_0\}.$$
Clearly, $\H^0(L_0,L_*)=\H^0(\pi_0(L_*),\pi_1(L_*))$.
It is a central subgroup of $L_1$.

In order to define the first cohomology group $\H^1(L_0,L_*)$, we first introduce the group $\Der_{L_0}(L_0,L_1)$. Elements of $\Der_{L_0}(L_0,L_1)$ are pairs $(g,\gamma)$ (see \cite[Definition 2.2.]{dg1}), where $g\in L_0$ and $\gamma:L_0\to L_1$ is a function for which two conditions hold:
$$\gamma([s,t])=s\cdot \gamma(t)-t\cdot \gamma (s)  \quad  {\rm and} \quad  \d \gamma(t)=[g, t].$$ Here $g,s,t\in L_0$. Comparing with Definition \ref{zl}, we see that these are exactly the conditions \ref{ZE1L} and \ref{ZE3L} of Definition \ref{zl}. Hence ${\bf Z}_0(L_*) \subset \Der_{L_0}(L_0,L_1)
$. 

Define a Lie algebra structure on $\Der_{L_0}(L_0,L_1)$ as follows.
If $(g,\gamma),(g',\gamma')\in \Der_{L_0}(L_0,L_1)$, then $([g,g'],\, \gamma\ast \gamma')\in \Der_{\G_0}(\G_0,\G_1)$ (see \cite[Lemme 2.3.1]{dg1}), 
where $\gamma\ast \gamma'$ is defined by
$$(\gamma\ast \gamma')(t)=\gamma(g'\cdot t)-\gamma'(g\cdot t).$$
Then ${\bf Z}_0(L_*)$ is a subalgebra of $\Der_{L_0}(L_0,L_1)$.

The group $\H^1(L_0,L_*)$ is defined as the quotient  $\Der_{L_0}(L_0,L_1)/I$, where 
$$I=\{(\d(a)+c,\eta_a)\}$$
Here $c$ is a central element of $L_0$, $a\in L_1$ and $\eta_a: L_0\to L_1$ is defined by
$$\eta_a(t)=t\cdot a.$$
The fact that $I$ is an ideal is proved in \cite[Lemma 2.4] {dg1}.
The following fact is a direct consequence of the definition.

\begin{Le} One has a commutative diagram with exact rows
$$\xymatrix{
	0\ar[r] &\H^0(L_0,L_*)\ar[r]& L_1\ar[rr]^\dd& &\Der_{L_0}(L_0,L_1)\ar[r] & \H^1(L_0,L_*)\ar[r] & 1 \\
	0\ar[r] & \pi_1({\bf Z}_*(L_*)) \ar[u]_{\cong }\ar[r]& L_1\ar[u]_{Id}\ar[rr]^\dd &&{\bf Z}_0(L_*)\ar[r] \ar@{^{(}->}[u]
	 & \pi_0({\bf Z}_*(L_*)) \ar@{^{(}->}[u] \ar[r]& 1
}
$$
\end{Le}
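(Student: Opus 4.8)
The plan is to establish the two rows as exact sequences first, then produce the vertical maps and check that the two squares commute. For the top row, I would unwind the definitions from \cite{dg1}: exactness at $L_1$ means $\ker(\dd)=\H^0(L_0,L_*)$, which is immediate since $\dd(a)=(\d a,\eta_{-a})$ (note $\xi_a(t)=-t\cdot a=-\eta_a(t)$), so $\dd(a)=(0,0)$ forces $\d a=0$ and $t\cdot a=0$ for all $t$, i.e. $a\in\H^0(L_0,L_*)$; surjectivity of $\Der_{L_0}(L_0,L_1)\to\H^1(L_0,L_*)$ is the definition of the quotient; and exactness at $\Der_{L_0}(L_0,L_1)$ amounts to the statement that the image of $\dd$ is exactly the part of the ideal $I$ corresponding to the choice $c=0$ in $(\d(a)+c,\eta_a)$ — more precisely one must observe that $\dd(a)=(\d(a),-\eta_a)$, so $-\dd(a)=(\d(a),\eta_a)\in I$, and conversely an element of $I$ of the form $(\d(a)+c,\eta_a)$ with $c$ central is in $\Der_{L_0}(L_0,L_1)$ and differs from $-\dd(a)$ by the pair $(c,0)$; since $c$ is central the latter lies in a piece that maps to zero, and one concludes $\im(\dd)$ generates the same subquotient. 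I would state this carefully but lean on \cite[Lemma 2.4]{dg1} for the fact that $I$ is an ideal.

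For the bottom row, I would invoke the results already proved in the excerpt: $\pi_1({\bf Z}_*(L_*))=\ker(\dd)$ by definition of $\pi_1$ of a crossed module, which was identified with $\H^0(\pi_0(L_*),\pi_1(L_*))$ in the Lemma preceding Proposition~\ref{15pr}; $\pi_0({\bf Z}_*(L_*))=\cok(\dd)={\bf Z}_0(L_*)/\im(\dd)$ by definition; and the Corollary establishing the Lie algebra structure on ${\bf Z}_0(L_*)$ makes ${\bf Z}_0(L_*)\xto{\dd}\cdots$ a crossed module, so all four terms are Lie algebras and the maps are homomorphisms. Exactness of the bottom row is then formal: $0\to\ker(\dd)\to L_1\xto{\dd}{\bf Z}_0(L_*)\to\cok(\dd)\to 0$.

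For the vertical maps: the leftmost is the isomorphism $\H^0(\pi_0(L_*),\pi_1(L_*))\xto{\cong}\H^0(L_0,L_*)$, which is literally an equality of subsets of $L_1$ as noted at the start of Section~\ref{43se}; the second vertical is $\id_{L_1}$; the third is the inclusion ${\bf Z}_0(L_*)\hookrightarrow\Der_{L_0}(L_0,L_1)$, justified in the text since the defining conditions (\ref{ZE1L}) and (\ref{ZE3L}) of ${\bf Z}_0(L_*)$ are exactly those defining $\Der_{L_0}(L_0,L_1)$, plus one should check this inclusion is a Lie algebra homomorphism by comparing the bracket $(g,\gamma)\mapsto([g,g'],\gamma\ast\gamma')$ of \cite{dg1} with the bracket $[(x,\xi),(y,\eta)]=([x,y],t\mapsto -t\cdot\xi(y))$ from the Corollary — this is the one genuinely computational check, and I expect it to be the main obstacle: one must verify $(\xi\ast\eta)(t)=\xi(x'\cdot t)-\eta(x\cdot t)$ agrees with $-t\cdot\xi(y)$ on ${\bf Z}_0(L_*)$, which should follow by rewriting via (\ref{ZE1L})--(\ref{ZE3L}) and Lemma~\ref{25le}. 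The rightmost vertical $\pi_0({\bf Z}_*(L_*))\to\H^1(L_0,L_*)$ is then the map induced on cokernels of $\dd$ by the third vertical together with $\id_{L_1}$; it is injective because the third vertical is injective and $\im(\dd)={\bf Z}_0(L_*)\cap I$ (the preimage of $I$ under the inclusion), which is exactly the content needed and follows from the analysis of $I$ above. Commutativity of the left square is the compatibility of the inclusion $\H^0(\pi_0(L_*),\pi_1(L_*))=\H^0(L_0,L_*)$ with the two copies of $L_1\hookleftarrow$; commutativity of the middle square is trivial since $\dd$ is literally the same map; commutativity of the right square is how the fourth vertical was defined. I would present all of this compactly, flag the bracket-compatibility computation as the only nontrivial point, and cite \cite{dg1} for the structure of $\Der_{L_0}(L_0,L_1)$ and $I$.
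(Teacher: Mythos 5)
Your overall architecture is the same as the paper's (the paper in fact offers no argument beyond ``direct consequence of the definition''): the bottom row is the tautological sequence $0\to\ker(\dd)\to L_1\xto{\dd}{\bf Z}_0(L_*)\to\cok(\dd)\to 0$, the left vertical is the equality $\H^0(L_0,L_*)=\H^0(\pi_0(L_*),\pi_1(L_*))$ noted at the start of Section \ref{43se}, the middle vertical is the identity, the third vertical is the inclusion ${\bf Z}_0(L_*)\subset\Der_{L_0}(L_0,L_1)$ coming from conditions (\ref{ZE1L}) and (\ref{ZE3L}), and the compatibility of the two brackets that you flag as the ``one computational check'' is exactly what the paper's subsequent Remark verifies via Lemma \ref{25le}. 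All of that part of your proposal is fine and matches the intended proof.

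The genuine gap is where you claim the only actual content: exactness of the top row at $\Der_{L_0}(L_0,L_1)$ and the injectivity of the right vertical $\pi_0({\bf Z}_*(L_*))\to\H^1(L_0,L_*)$. Exactness there means $\ker\bigl(\Der_{L_0}(L_0,L_1)\to\H^1(L_0,L_*)\bigr)=I$ coincides with $\im(\dd)$, and injectivity of the right vertical means $\im(\dd)={\bf Z}_0(L_*)\cap I$. Your argument -- an element $(\d(a)+c,\eta_a)$ of $I$ ``differs from $-\dd(a)$ by $(c,0)$, which maps to zero, so $\im(\dd)$ generates the same subquotient'' -- only re-proves the easy inclusion $\im(\dd)\subseteq I$ (i.e.\ that the composite $L_1\to\Der_{L_0}(L_0,L_1)\to\H^1(L_0,L_*)$ vanishes); it says nothing about the reverse inclusion, which is precisely what is needed. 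Indeed, with $I$ as reproduced in the paper, a pair $(c,0)$ with $c$ central in $L_0$ and $c\cdot L_1=0$ lies in $I$ and in ${\bf Z}_0(L_*)$, but it lies in $\im(\dd)$ only if $c=\d(e)$ for some $e$ with $t\cdot e=0$ for all $t$; so the identity $\im(\dd)={\bf Z}_0(L_*)\cap I$ you invoke does not ``follow from the analysis above'' -- it requires pinning down exactly which pairs Guin's ideal $I$ contains (and how the central summand $c$ is constrained), which your write-up never does. There is also a bookkeeping error feeding into this: since $\xi_a=-\eta_a$, one has $-\dd(a)=\dd(-a)=(-\d(a),\eta_a)$, not $(\d(a),\eta_a)$; in fact $(\d(a),\eta_a)$ does not even satisfy the derivation condition $\d\gamma(t)=[g,t]$ in general, because $\d\eta_a(t)=[t,\d a]$. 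So before the comparison of $\im(\dd)$ with $I$ can be carried out, the sign conventions relating $\eta_a$, $\xi_a$ and the description of $I$ must be fixed consistently with \cite{dg1}; as written, the step on which both the top-row exactness and the hooked right vertical depend is not proved.
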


\begin{Rem}
Guin's definition \cite[Lemme 2.3.1]{dg1} of the Lie algebra structure on \(\Der_{L_0}(L_0,L_1)\) agrees with our definition of the Lie algebra structure on \({\bf Z}_0(L_*)\). Guin's definition directly translates to \[[(x,\xi),(y,\eta)] = \xi([y,t])-\eta([x,t]),\]
while from Proposition \ref{clb} we have
\[\{(x,\xi),(y,\eta)\} = \xi(y),\]
which lifts to
\[[(x,\xi),(y,\eta)] = -t\cdot\xi(y).\]
However, 

\[ \xi([y,t])-\eta([x,t]) = -t\cdot\xi(y),\]
according to Lemma \ref{25le} and so the definitions agree.

\end{Rem}

\section*{Addendum}
In \cite{mp_cent} we proved that the centre of a crossed module is intimately related to the Drinfeld centre  of a monoidal category \cite{QG}. Here we introduce the notion of a centre for Lie 2-algebras \cite{HAVI}, which are the Lie analogues of categorical groups. 
By definition a (strict) Lie $2$-algebra is a Lie algebra object in the category of all small categories. Thus a Lie $2$-algebra is a category $\bf L$ whose set of objects is denoted by ${\bf L}_0$ and whose set of arrows is denoted by ${\bf L}_1$. Two bifunctors are given:
the addition $+:{\bf L}\times {\bf L} \to {\bf L}$ and bracket $[-,]:{\bf L}\times {\bf L} \to {\bf L}$. Also, for any $k\in  K$ an endofunctor  $\lambda_
k:{\bf L} \to {\bf L}$ (the multiplication by a scalar) is given such that the all axioms of a Lie algebra hold strictly. For example, the distributivity law $k(x+y)=kx+ky$ implies the commutativity of the  diagram
$$\xymatrix{{\bf L}\times {\bf L}\ar[r]^{+}\ar[d]_{\lambda_k\times \lambda_k}& {\bf L}\ar[d]^{\lambda_k}\\
	{\bf L} \times {\bf L} \ar[r]^{+}& {\bf L}.
}$$

It is well-known that the category of Lie 2-algebras and the category of crossed modules are equivalent. We recall how to obtain Lie 2-algebras from crossed modules. 

Let $\d:X_1\to X_0$ be  a linear map. Then one has the category ${\sf Cat}(X_*)$. Objects are elements of $X_0$ and a morphism from $x$ to $y$ is given by the diagram $x\xto{a} y$, where $a\in X_1$ satisfies the condition $\d  (a)+x=y$. Sometimes this morphism is also denoted by $(x,a)$. Thus $(x,a):x\to \d (a)+x$. The composition law in  ${\sf Cat}(X_*)$ is induced by the addition in $L_1$. The identity morphism $\id_x$ of an object $x$ is $x\xto{0} x$. So $\id_x=(x,0)$. For any $k\in K$ we have an endofunctor $\lambda_k:{\sf Cat}(X_*) \to {\sf Cat}(X_*)$ which on objects is given by $\lambda_k(x)=kx$ and on morphisms it is given by
\[\lambda_k(x\xto{a} y)=kx\xto{ka} ky.\]
We also have the bifunctor \[+:{\sf Cat}(X_*) \times  {\sf Cat}(X_*) \to {\sf Cat}(X_*)\]
which on objects is given by $(x,y)\mapsto x+y$ and on morphisms by
\[(x\xto{a} y)+(x'\xto{a'} y)'=x+x'\xto{a+a'} y+y'.\]
In this way we obtain a strict $K$-vector space object in the category of small categories. 

In the case when instead of a linear map $\d:X_1\to X_0$, a crossed module of Lie algebras is given, there is a bifunctor $[-,-]:{\sf Cat}(L_*)\times {\sf Cat}(L_*)\to {\sf Cat}(L_*)$, which on objects is given by the Lie algebra structure on $L_0$ and on morphisms it assigns
to  $x\xto{a} y$ and $x'\xto{a'} y'$ the morphism
\[\xymatrix{[x,x']\ar[rrr]^{\d(a)\cdot a'-x'\cdot a+x\cdot a'}&&&
	[y,y'] }.\]

In particular, we have the morphism $[x,x']\xto{x\cdot a'} [x,y']$, which is also denoted by $[x,x']\xto{[\id_x,a']}[x,y']$.
In other words \[[(x,a),(x',a')]=([x,x'],\d(a)\cdot a'-x'\cdot a+x\cdot a').\]

For any $(x,\xi)\in {\bf Z}_0(L_*)$ and $y\in L_0$ one puts $\tau_y=-2\xi(y)$. Then $\tau_y$
defines a morphism $\tau_y:[x,y]\to [y,x]$. In fact, the collection $(\tau_y)_{y\in L_0}$
defines a natural transformation of endofunctors
$$\tau:[x,-]\to [-,x]$$
for which  additionally the following equality holds
$$\tau_{[y,z]}=y\cdot \tau_z-z\cdot \tau_y.$$
This equality can be rewritten as \begin{equation}\label{taufr}\tau_{[y,z]}=[\id_y,\tau_z]-[\id_z, \tau_y].\end{equation}

This suggests that we can define the centre of a Lie 2-algebra ${\bf L}$ to be the category whose objects are pairs $(x,\tau)$, where $x\in {\bf L}_0$ is an object of ${\bf L}$  and $\tau$ is a natural transformation $\tau:[x,-]\to [-,x]$, that is, the collection of morphisms $(\tau_y:[x,y]\to [y,x])_{y\in{\bf L}_0}$ such that for any morphism $a:y\to z$ one has a commutative diagram
$$\xymatrix{[x,y]\ar[r]^{\tau_y} \ar[d] _{[\id_x,a]}& [y,x] \ar[d]^{[a,\id_x]}\\
	[x,z]\ar[r]_{\tau_z}& [z,x]}.$$
One requires that additionally the equality (\ref{taufr}) holds. A morphism $(x,\tau)\to (x',\tau')$ is given by a morphism $a:x\to x'$ for which the following diagram commutes
$$\xymatrix{[x,y]\ar[r]^{\tau_y} \ar[d] _{[a,\id_y]}& [y,x] \ar[d]^{[\id_y,a]}\\
	[x',y]\ar[r]_{\tau'_y}& [y,x'].
}$$
Denote this category by ${\mathcal Z}({\bf L})$ and call it the centre of the Lie $2$-algebra ${\bf L}$. 
The bifunctor $$[-,-]:
{\mathcal Z}({\bf L})\times {\mathcal Z}({\bf L})\to {\mathcal Z}({\bf L})$$
is defined by
$$[(x,\tau),(y,\eta)]=([x,y],\theta)$$
where $$\theta_z:[[x,y],z]\to [z,[x,y]]$$
is given by
$$\theta_z=\tau_{[y,z]}-\eta_{[x,z]}.$$
Here we used the facts that
$$[[x,y],z]=[x,[y,z]]-[y,[x,z]] \quad {\rm and} \quad 
[z,[x,y]]= [[y,z],x]-[[x,z],y].$$
Our computations show that ${\mathcal Z}({\bf L})$  is again a Lie 2-algebra and in fact there is also braiding (see  \cite{ulu},\cite{manolo} for braided Lie 2-algebras) given by
\[ \tau_{[(x',\tau'), (x'',\tau'')]}=\tau'_y.\]

\end{document}